\newcommand{\R}{\mathbb{R}}
\newcommand{\T}{\mathbb{T}}
\newcommand{\Z}{\mathbb{Z}}
\newcommand{\C}{\mathbb{C}}
\newcommand{\Id}{\textup{Id}}
\newcommand{\A}{\mathcal{A}}
\newcommand{\dV}{\,d\mathcal{V}}
\newcommand{\wt}[1]{\widetilde{#1}}
\newcommand{\grad}{\textup{grad}\,}
\renewcommand{\del}{\partial}
\newcommand{\M}{\mathcal{M}}
\newcommand{\N}{\mathbb{N}}
\newcommand{\blue}[1]{\textcolor{blue}{#1}}
\renewcommand{\L}{\mathcal{L}}
\newcommand{\loc}{\textit{loc}}
\newcommand{\F}{\mathcal{F}}
\newcommand{\delslash}{\cancel{\del}}
\let\pp\S
\renewcommand{\S}{\mathcal{S}}
\newtheorem{theorem}{Theorem}
\newtheorem{lemma}[theorem]{Lemma}
\newtheorem{corollary}[theorem]{Corollary}
\newtheorem{remark}[theorem]{Remark}
\newtheorem{example}[theorem]{Example}
\numberwithin{theorem}{section}
\title{An approach to Hamiltonian Floer theory for maps from surfaces}
\author{ Ronen Brilleslijper and Oliver Fabert }
\date{\today}
\begin{document}

\maketitle

\begin{abstract}
    In $n$-dimensional classical field theory one studies maps from $n$-dimensional manifolds in such a way that classical mechanics is recovered for $n=1$. In previous papers we have shown that the standard polysymplectic framework in which field theory is described, is not suitable for variational techniques. In this paper, we introduce for $n=2$ a Lagrange-Hamilton formalism that allows us to define a generalization of Hamiltonian Floer theory. As an application, we prove a cuplength estimate for our Hamiltonian equations that yields a lower bound on the number of solutions to Laplace equations with nonlinearity. We also discuss the relation with holomorphic Floer theory.
\end{abstract}

\tableofcontents

\section{Introduction}

This articles continues in the lines of \cite{polysymplFloer,hyperkahlerRookworst}, developing a Hamiltonian formalism for (respectively Minkowski and Euclidean) field theories that is fit for variational calculus. In the current article, the focus lies on the Laplace equation with non-linearity on the two-dimensional torus. The aim of this article is twofold. First of all, we will provide a connection between the Hamilton equations and their Lagrangian counterpart. In this process a nice connection will emerge with complex structures and we will highlight a relation to holomorphic Hamiltonian systems. Secondly, we will prove a cuplength result similar to \cite{hyperkahlerRookworst} in the two-dimensional case using Floer theory. The novel contributions of this paper are the Fredholm theory and compactness results for the moduli space of Floer curves (see \cref{sec:Fredholm,sec:Compactness}). Working out these aspects puts us a step closer to the development of a full Floer theory. Furthermore the analysis should be useful also for defining a holomorphic Floer theory.

The Hamiltonian formalism used in this article arises from what we call \emph{Bridges regularization}. This formalism can be found in \cite{bridgesTEA} and provides a Hamiltonian framework for, amongst others, the Laplace equation that pertains its nice analytical properties. To motivate its merits, let us start by looking at Newton's equation
\begin{align}\label{eq:Newton}
    -\frac{d^2}{dt^2}q(t)=V'_t(q(t)),
\end{align}
for a particle $q:\R\to Q$ subject to some potential $V_t:Q\to\R$ on a manifold $Q$. To formulate this as a first-order equation, we can introduce the momentum $p(t)=\frac{d}{dt}q(t)$ and the Hamiltonian $H_t(u)=\frac{1}{2}|p|^2+V_t(q)$, for $u=(q,p)$ and write \cref{eq:Newton} as
\begin{align}\label{eq:NewtonHam}
    J\frac{d}{dt}u(t)=\nabla H_t(u(t)).
\end{align}
Here $J=\begin{pmatrix}0&-\Id\\\Id&0\end{pmatrix}$ is the standard complex structure on the cotangent bundle $T^*Q$. Note that this so-called Hamiltonian equation is elliptic and that $(J\frac{d}{dt})^2=-\frac{d^2}{dt^2}$. In Floer theory, one uses this Hamiltonian formalism to rewrite solutions to \cref{eq:Newton} as critical points of some action functional $\A_H$. One then studies gradient-lines of $\A_H$ to conclude existence results for solutions to Newton's equation. A well-known result in Floer theory is that the number of critical points of $\A_H$ is bounded below by the topology of $Q$. See \cite{albers2016cuplength} for more explanation and \cite{cieliebak1994pseudo} for the specific case of cotangent bundles. Crucial in the proofs of these results is the ellipticity of \cref{eq:NewtonHam}.

Note that \cref{eq:NewtonHam} comes from a Lagrangian formalism. Periodic solutions to \cref{eq:Newton} arise as critical points of the Lagrangian action functional
\[
\S[q]=\int_{S^1}L(q(t),q'(t),t)\,dt,
\]   
where $L(q,q',t)=\frac{1}{2}(q')^2-V_t(q)$. When we define $p=\frac{\del L}{\del q'}$ and $H=pq'-L$, the Euler-Lagrange equations for the Lagrangian action coincide with \cref{eq:NewtonHam}. Here, in order to view $H_t$ as a function of $q$ and $p$, we need the Legendre condition 
\[
\det\frac{\del^2 L}{\del q'^2}\neq 0. 
\]
We will see a similar condition when connecting the Lagrangian and Hamiltonian approaches for the Laplace equation (see \cref{sec:Lagrangian}). 

\subsection{Two-dimensional Bridges regularization}
In this article, the starting point is the Laplace equation on the 2-dimensional torus. Let $q:\T^2\to\T^d$ satisfy the Laplace equation with non-linearity
\begin{align}\label{eq:Laplace}
    -(\del_1^2+\del_2^2)q(t) = \frac{dV}{dq}(q(t)),
\end{align}
where $t=(t_1,t_2)\in\T^2$, $\del_i:=\del_{t_i}$ and $V:\T^d\to\R$ is a non-linearity. We will prove a cuplength result for the number of solutions to \cref{eq:Laplace} and actually for more general equations. In comparison with Newton's equation we will first try to formulate \ref{eq:Laplace} as a first-order equation. The standard way of doing this, is by introducing 'momentum-like' variables for the different derivatives. The following set of equations follows this so-called de Donder-Weyl approach:\footnote{Note that the de Donder-Weyl equations are typically used as a first-order approach to the wave equation, so the signs are different. See a.o. \cite{kanatchikov1993canonical,gunther1987polysymplectic}.}
\begin{alignat}{3}
    &\del_1 q & &= p_1\nonumber\\
    & &\del_2 q &= p_2\label{eq:DW}\\
    -&\del_1 p_1 &- \del_2 p_2 &= \frac{dV}{dq}\nonumber.
\end{alignat}
Even though \cref{eq:DW} can be written using a Hamiltonian $H$ as $K_\del u(t) = \nabla H(u(t))$ with $u=(q,p_1,p_2)$ for a first-order differential operator $K_\del$, this equation on its own has some major disadvantages. Most notably, the equation is not elliptic anymore, whereas \cref{eq:Laplace} is. For Floer theoretic purposes this is highly undesirable. Moreover, $K_\del$ has an infinite-dimensional kernel on the space of maps on $\T^2$ and it does not square to the Laplacian in contrast to the mechanical case, where $(J\frac{d}{dt})^2=-\frac{d^2}{dt^2}$. As proven in \cite{polysymplFloer} for the case of the wave equation, the Floer curves corresponding to \cref{eq:DW} do not necessarily converge to solutions. Also see \cite{hyperkahlerRookworst} for a similar problem for the Laplace equation on a 3-dimensional torus. 

Fortunately, there is a different way of writing \cref{eq:Laplace} as a first-order system that solves these problems. First proposed in \cite{bridgesderks} and later worked out in \cite{bridgesTEA}, this new set of equations follows from adding some extra terms to \cref{eq:DW}, depicted in blue.
\begin{align}\begin{split}\label{eq:Bridges}
    \del_1 q_1 \blue{+\del_2 q_2}&=p_1\\
    \blue{-\del_1 q_2 +} \del_2 q_1 &= p_2\\
    -\del_1 p_1 - \del_2 p_2 &= \frac{dV_1}{dq_1}\\
    \blue{\del_1p_2-\del_2p_1}&\blue{=\frac{dV_2}{dq_2}}
\end{split}\end{align}
Note that we changed the variable $q$ from \cref{eq:DW} into $q_1$ here, to stress that this first-order system of equations actually describes two Laplace equations 
\begin{align*}
    -(\del_1^2+\del_2^2)q_1(t) &= \frac{dV_1}{dq_1}(q_1(t))\\
    -(\del_1^2+\del_2^2)q_2(t) &= \frac{dV_2}{dq_2}(q_2(t)).
\end{align*}
The underlying geometric structure behind \cref{eq:Bridges}, which we call \textit{Bridges' equations} is explained in \cite{bridgesTEA}. A shorter version in the context of maps between tori can also be found in \cite{polysymplFloer,hyperkahlerRookworst}, so we do not go into details on this in the present article. However, we do want to stress again that \cref{eq:Bridges} is an elliptic first-order system. We can write it as 
\begin{align*}
    J_\del Z(t) = \nabla H(Z(t)),
\end{align*}
where $Z=(q_1,q_2,p_1,p_2)$, $H(Z)=\frac{1}{2}p_1^2+\frac{1}{2}p_2^2+V_1(q_1)+V_2(q_2)$ and $J_\del$ is a first-order operator satisfying $J_\del^2=-(\del_1^2+\del_2^2)$. Note that only constant functions lie in the kernel of $J_\del$. 

\begin{remark}\label{rem:hyperkahler}
    Note that there is a hyperk\"ahler structure on the target space hidden in these equations. When we write out $J_\del=J_1\del_1+J_2\del_2$,then $J_1$ and $J_2$ are both complex structures and their product is too. In \cref{rem:Fueter} we will note that this yields a relation between our Floer curves and the Fueter equation. The complex structure $J_1$ pairs $q_i$ with $p_i$ whereas $J_2$ pairs $q_i$ with $p_j$ for $i\neq j$. Their product $J_3=J_1J_2$ therefore pairs $q_1$ with $q_2$ and $p_1$ with $p_2$. In \cref{sec:complexandlagrange} we will see that indeed \cref{eq:Bridges} can be written using some complex multiplication on $\T^{2d}$.
\end{remark}

\Cref{eq:Bridges} can also be described in terms of the polysymplectic form 
\begin{align}\label{eq:poly}
    \Omega=\omega_1\otimes \del_1+\omega_2\otimes \del_2,
\end{align}
where $\omega_1 = dp_1\wedge dq_1 - dp_2\wedge dq_2$ and $\omega_2 = dp_2\wedge dq_1 + dp_1\wedge dq_2$. This description will be of use later, when we compare the equations to holomorphic Hamiltonian systems. Note that both $\omega_1$ and $\omega_2$ are symplectic forms. Heuristically, $\omega_i$ determines the derivative in the direction of $t_i$. In other words, $Z=(q_1,q_2,p_1,p_2)$ is a solution to \cref{eq:Bridges} if and only if 
\begin{align*}
    \omega_1(\cdot,\del_1 Z)+\omega_2(\cdot,\del_2 Z) = dH.
\end{align*}
See \cite{gunther1987polysymplectic} for an exposition of polysymplectic geometry. 

\subsection{Outline of the article}
This paper is organized as follows. In \cref{sec:complexandlagrange} we start by rewriting the Bridges equations using language from complex analysis. After relating our equations to holomorphic Hamiltonian systems, we discuss the underlying Lagrangian formalism. In \cref{sec:cuplength} we state the main \cref{thm:cuplength} together with its implication for Laplace equatinos with nonlinearity. The $C^0$-bounds are given for both the orbits and the Floer curves. The section concludes with a proof of the theorem, modulo the necessary Fredholm theory and compactness results. These are treated in respectively sections \ref{sec:Fredholm} and \ref{sec:Compactness}. The proof of one technical lemma needed for compactness is deferred to the appendix. 

\section{Complex structures and the Lagrangian formalism}\label{sec:complexandlagrange}
Before motivating the Hamiltonian approach to Laplace's equation mentioned above from the corresponding Lagrangian system, we first want to note that we can rewrite \cref{eq:Bridges} in terms of complex structures. Note that for $H\equiv 0$ (i.e. a vanishing right-hand side in \cref{eq:Bridges}), Bridges' equations reduce to a set of two Cauchy Riemann equations for the pairs $(q_1,q_2)$ and $(p_1,p_2)$. This leads us to define $t=t_1+it_2$ and correspondingly $\bar{t}=t_1-it_2$. Also let $q=q_1+iq_2$, $p=p_1+ip_2$ and correspondingly $\bar{q}=q_1-iq_2$, $\bar{p}=p_1-ip_2$. Then \cref{eq:Bridges} is equivalent to the set of equations
\begin{align}\begin{split}\label{eq:complex}
    \del_t q &= \frac{1}{2}\bar{p} = \frac{\del H}{\del p}\\
    -\del_t p &= \frac{\del V}{\del q} = \frac{\del H}{\del q},
\end{split}\end{align}
where $\del_t = \frac{1}{2}(\del_1-i\del_2)$, $\frac{\del}{\del q} = \frac{1}{2}(\frac{\del}{\del q_1} -i\frac{\del}{\del q_2})$ and $V(q,\bar{q})=V_1(q_1)+V_2(q_2)\in\R$. Also $H=H(q,\bar{q}, p, \bar{p})=\frac{1}{2}p\bar{p}+V(q,\bar{q})$ is still the same real-valued function as before, but now seen as a function of these new coordinates.  

While the Hamiltonian formalism for the Laplace equation might come across as purely notation at this point, we will show in the \cref{sec:Lagrangian} that these equations actually relate to a Lagrangian version of a minimization problem by means of a Legendre transform. This will also provide us with a much bigger class of examples then just Laplace equations. Before that, the next section explains the relation to holomorphic Hamiltonian systems.

\subsection{Relation to holomorphic Hamiltonian systems}\label{rem:HHS}
    \Cref{eq:complex} looks very similar to the holomorphic Hamiltonian systems (HHS) studied in \cite{wagner2023pseudo}. The main difference is that their Hamiltonians are holomorphic functions with values in $\C$, whereas our Hamiltonian is real-valued. We crucially need the derivative $\frac{\del H}{\del p}$ to actually depend on $\bar{p}$ in order to get the Laplace equation
    \begin{align}\label{eq:complexLaplace}-\Delta (q_1+iq_2) = -4\del_{\bar{t}}\del_t q = -2\del_{\bar{t}}\bar{p}=2\frac{\del V}{\del \bar{q}} = \frac{d V_1}{d q_1}+i\frac{d V_2}{d q_2}.\end{align}
    Notice also that our Hamiltonians are not necessarily the real or imaginary parts of a holomorphic Hamiltonian, since we do not require them to be harmonic.

    Interestingly, both the HHS's from Wagner and the Bridges equations used in this article can be described as special cases of a broader framework. Recall the polysymplectic form $\Omega$ from \cref{eq:poly}. In the complex notation it can be written as $\Omega=\omega_\C\otimes\del_t + \bar{\omega}_\C\otimes\del_{\bar{t}}$, where $\omega_\C = dp\wedge dq$ and $\bar{\omega}_\C=d\bar{p}\wedge d\bar{q}$. Just as in \cite{wagner2023pseudo}, the form $\omega_\C$ (respectively $\bar{\omega}_\C$) induces an isomorphisms between the holomorphic (respectively anti-holomorphic) tangent and cotangent bundles
    \begin{align*}
        \omega_\C^\flat&:T^{(1,0)}X\overset{\sim}{\to} T^{*(1,0)}X\\
        \bar{\omega}_\C^\flat&:T^{(0,1)}X\overset{\sim}{\to} T^{*(0,1)}X,
    \end{align*}
    where $X=T^*\T^{2d}$. Thus, the polysymplectic form $\Omega$ induces an isomorphism $\omega_\C^\flat\oplus\bar{\omega}_\C^\flat:T_\C X\overset{\sim}{\to} T^*_\C X$, where the complexified tangent and cotangent bundles are viewed as the direct sums of their holomorphic and anti-holomorphic subbundles. In principle this means that for any function $H:X\to\C$ we get a complex vector field by taking the inverse of $dH$ under this map. In the study of HHS's $H$ is taken to be a holomorphic function, which means that $dH\in T^{*(1,0)}X$. Thus the Hamiltonian equations only take into account the holomorphic vector field $(\omega_\C^\flat)^{-1}(dH)$ and the anti-holomorphic vector field vanishes. In our case the Hamiltonians are real-valued meaning that its holomorphic and anti-holomorphic derivatives are equal. Thus $dH$ lies in the diagonal of $T^*_\C X = T^{*(1,0)}X\oplus T^{*(0,1)}X$ and we actually get two sets of conjugate equations. The analysis in this article is aimed at the case of real-valued Hamiltonians, but we expect similar techniques to work in the holomorphic setting.

    \begin{remark}
        Note that \cref{eq:complex} is written in terms of local coordinates on the torus. However, just as for HHS's, the equations can be formulated on more general manifolds. For \cref{eq:complex} to make sense the domain should be a Riemann surface and the target a complex manifold. From a dynamical point of view this could be of particular interest for string theory, where the Riemann surface is viewed as the closed world-sheet of a 1-dimensional string and the target manifold is Calabi-Yau. The flatness of the Calabi-Yau manifold will be crucial for the compactness result to hold, just like in \cite{hohloch2009hypercontact}.
    \end{remark}

\subsection{Lagrangian approach}\label{sec:Lagrangian}
We observe that solutions to the Laplace equations arise as critical points of the functional
\begin{align*}
    \S[q]=-\int_{\T^2} \left(2\del_t q \del_{\bar{t}}\bar{q}-V(q,\bar{q})\right)\frac{dt\wedge d\bar{t}}{2i},
\end{align*}
where we use the notation of \cref{eq:complexLaplace} (for a proof see \cref{lem:EL} below). In this section, we will develop a connection between the Lagrangian picture and the Hamiltonian formalism sketched above.

Let $L=L(q,\bar{q},\del_t q, \del_{\bar{t}}\bar{q},t,\bar{t})$ be a Lagrangian with values in $\R$ and 
\begin{align}\label{eq:action}\S[q]=-\int_{\T^2}L(q,\bar{q},\del_t q, \del_{\bar{t}}\bar{q},t,\bar{t})\frac{dt\wedge d\bar{t}}{2i}\end{align}
the corresponding action. Note that $\S$ is real-valued as well. Our main example is the Lagrangian $L=2\del_tq\del_{\bar{t}}\bar{q}-V(q,\bar{q})$ with action given above. 
\begin{lemma}\label{lem:EL}
    The Euler-Lagrange equations for the action (\ref{eq:action}) are given by
    \begin{align}\label{eq:EL}
        \frac{\del L}{\del q} = \del_t \frac{\del L}{\del(\del_t q)} && \frac{\del L}{\del \bar{q}} = \del_{\bar{t}} \frac{\del L}{\del(\del_{\bar{t}} \bar{q})}.
    \end{align}
    Note that these two equations are each others conjugates as $L$ is real. 
\end{lemma}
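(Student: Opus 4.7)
The plan is a standard calculus of variations argument, adapted to the complex coordinates $t, \bar t$ and $q, \bar q$. First I would write $q = q_1 + i q_2$ and perturb by a smooth variation $\eta = \eta_1 + i\eta_2 : \T^2 \to \C^d$, so that $q \mapsto q + \epsilon \eta$ and $\bar q \mapsto \bar q + \epsilon \bar\eta$. Differentiating $\S[q + \epsilon \eta]$ at $\epsilon = 0$ and applying the chain rule gives
\[
\frac{d}{d\epsilon}\bigg|_{\epsilon=0}\S[q+\epsilon\eta]=-\int_{\T^2}\!\left(\frac{\del L}{\del q}\eta+\frac{\del L}{\del \bar q}\bar\eta+\frac{\del L}{\del(\del_t q)}\del_t\eta+\frac{\del L}{\del(\del_{\bar t}\bar q)}\del_{\bar t}\bar\eta\right)\frac{dt\wedge d\bar t}{2i}.
\]

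Next I would integrate by parts in the two derivative terms. Because $\T^2$ has no boundary and $\del_t = \tfrac{1}{2}(\del_1-i\del_2)$, $\del_{\bar t}=\tfrac{1}{2}(\del_1+i\del_2)$ are real-linear combinations of $\del_1,\del_2$, one has $\int_{\T^2}(\del_t f)g\,\tfrac{dt\wedge d\bar t}{2i}=-\int_{\T^2}f(\del_t g)\,\tfrac{dt\wedge d\bar t}{2i}$ and similarly for $\del_{\bar t}$, with no boundary contributions. Grouping terms by $\eta$ and $\bar\eta$ yields
\[
\frac{d}{d\epsilon}\bigg|_{\epsilon=0}\S[q+\epsilon\eta]=-\int_{\T^2}\!\left[\left(\frac{\del L}{\del q}-\del_t\frac{\del L}{\del(\del_t q)}\right)\eta+\left(\frac{\del L}{\del \bar q}-\del_{\bar t}\frac{\del L}{\del(\del_{\bar t}\bar q)}\right)\bar\eta\right]\frac{dt\wedge d\bar t}{2i}.
\]

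Since $L$ is real valued, the two bracketed quantities are complex conjugates of each other, and the integrand is therefore the real part of $2(\tfrac{\del L}{\del q}-\del_t\tfrac{\del L}{\del(\del_t q)})\eta$. Requiring stationarity for all smooth variations $\eta$, and using the fundamental lemma of the calculus of variations applied to the real and imaginary parts $\eta_1,\eta_2$ separately (equivalently, treating $\eta$ and $\bar\eta$ as independent test functions in the sense that real and imaginary parts vary freely), forces the first bracket to vanish pointwise, which is the left equation of \cref{eq:EL}. The right equation is then immediate as its complex conjugate.

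The only delicate point, and the main thing to justify cleanly, is the step where one treats $\eta$ and $\bar\eta$ as independent. I would handle it by decomposing $\eta = \eta_1 + i\eta_2$, deriving the two real Euler--Lagrange equations by varying $q_1$ and $q_2$ separately, and then repackaging them via $\tfrac{\del}{\del q}=\tfrac12(\tfrac{\del}{\del q_1}-i\tfrac{\del}{\del q_2})$ and its conjugate to recover the form in \cref{eq:EL}. The rest is routine; there is no analytic obstacle since all variations can be taken smooth and compactly supported on $\T^2$.
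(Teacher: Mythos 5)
Your proof is correct and follows essentially the same route as the paper: vary $q$ by a smooth family, differentiate the action, integrate by parts on the boundaryless $\T^2$, and conclude that the two brackets must vanish. Your extra care in justifying that $\eta$ and $\bar\eta$ may be treated as independent (via the real/imaginary decomposition and the real-valuedness of $L$) is a welcome detail that the paper leaves implicit, but it does not change the argument.
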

\begin{proof}
    Let us calculate the critical points of $\S$. To this extent we take a family of maps $q_s$ where $s\in\R$ such that $q_0=q$ and write $\frac{dq_s}{ds}\vert_{s=0}=\dot{q}$. Then
    \begin{align*}
        \left.\frac{d}{ds}\right\vert_{s=0} \S[q_s] &= -\int_{\T^2}\left( \frac{\del L}{\del q}\dot{q}+\frac{\del L}{\del \bar{q}}\dot{\bar{q}} +\frac{\del L}{\del(\del_t q)}\del_t\dot{q} +\frac{\del L}{\del(\del_{\bar{t}}\bar{q})}\del_{\bar{t}}\dot{\bar{q}} \right)\frac{dt\wedge d\bar{t}}{2i}\\
        &=-\int_{\T^2} \left(\dot{q}\left(\frac{\del L}{\del q}-\del_t\frac{\del L}{\del(\del_t q)}\right)+\dot{\bar{q}}\left(\frac{\del L}{\del \bar{q}}-\del_{\bar{t}}\frac{\del L}{\del(\del_{\bar{t}} \bar{q})}\right)\right)\frac{dt\wedge d\bar{t}}{2i}.
    \end{align*}
    We see that this derivative vanishes precisely when \cref{eq:EL} is satisfied.
\end{proof}

In order to get back to a Hamiltonian formalism we define a momentum-like variable $p=\frac{\del L}{\del(\del_t q)}$ and correspondingly $\bar{p}=\frac{\del L}{\del(\del_{\bar{t}}\bar{q})}$. This yields 
\begin{align}\label{eq:dL}
    dL &= d(p\del_t q + \bar{p}\del_{\bar{t}}\bar{q})-\del_t qdp-\del_{\bar{t}}\bar{q}d\bar{p}+\frac{\del L}{\del q}dq+\frac{\del L}{\del \bar{q}}d\bar{q}+\frac{\del L}{\del t}dt+\frac{\del L}{\del \bar{t}}d\bar{t},
\end{align} 
which motivates the definition \[H(q,\bar{q},p,\bar{p},t,\bar{t})=p\del_t q +\bar{p}\del_{\bar{t}}\bar{q}-L(q,\bar{q},\del_t q, \del_{\bar{t}}\bar{q},t,\bar{t}),\] where $(\del_t q,\del_{\bar{t}}\bar{q})$ is seen as a function of $(q,\bar{q},p,\bar{p},t,\bar{t})$ by the inverse function theorem. This requires the Legendre-like condition \[\det\left(\left(\frac{\del^2 L}{\del(\del_t q)\del(\del_{\bar{t}}\bar{q})}\right)^2-\frac{\del^2 L}{\del(\del_t q)^2}\frac{\del^2 L}{\del(\del_{\bar{t}}\bar{q})^2}\right)\neq 0.\] Note again that $H$ is real-valued. 

\begin{lemma}
    The Euler-Lagrange equations (\ref{eq:EL}) are equivalent to the Hamiltonian equations
    \begin{align}\begin{split}\label{eq:Ham}
        \del_t q &= \frac{\del H}{\del p}\\
        -\del_t p &= \frac{\del H}{\del q}.
    \end{split}\end{align}
\end{lemma}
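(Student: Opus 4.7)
The plan is to run the standard Legendre-transform argument, now in the complex coordinates $(q,\bar q,p,\bar p)$. The first step is to differentiate the defining relation $H=p\partial_t q+\bar p\partial_{\bar t}\bar q-L$ and substitute the expression for $dL$ already recorded in \cref{eq:dL}. The $d(p\partial_t q+\bar p\partial_{\bar t}\bar q)$ terms cancel, and one is left with
\begin{align*}
dH=\partial_t q\,dp+\partial_{\bar t}\bar q\,d\bar p-\frac{\partial L}{\partial q}dq-\frac{\partial L}{\partial\bar q}d\bar q-\frac{\partial L}{\partial t}dt-\frac{\partial L}{\partial\bar t}d\bar t.
\end{align*}
Reading off coefficients (which is legal thanks to the Legendre-like invertibility condition, so that $(q,\bar q,p,\bar p,t,\bar t)$ really are independent coordinates), one gets the four identities $\partial H/\partial p=\partial_t q$, $\partial H/\partial\bar p=\partial_{\bar t}\bar q$, $\partial H/\partial q=-\partial L/\partial q$, $\partial H/\partial\bar q=-\partial L/\partial\bar q$.

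Next I would combine these with the defining relation $p=\partial L/\partial(\partial_t q)$ and its conjugate. The first Hamiltonian equation $\partial_t q=\partial H/\partial p$ is then simply a restatement of the inverse function theorem expressing $\partial_t q$ as a function of $(q,\bar q,p,\bar p)$, so it holds tautologically. For the second equation, I would substitute $\partial L/\partial q=-\partial H/\partial q$ into the Euler--Lagrange equation $\partial L/\partial q=\partial_t(\partial L/\partial(\partial_t q))=\partial_t p$, obtaining $-\partial_t p=\partial H/\partial q$. The conjugate Euler--Lagrange equation yields $-\partial_{\bar t}\bar p=\partial H/\partial\bar q$, which is the conjugate of \cref{eq:Ham} and hence, since $H$ is real-valued, contains no new information.

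Each implication is reversible: starting from \cref{eq:Ham}, one recovers $p=\partial L/\partial(\partial_t q)$ from $\partial_t q=\partial H/\partial p$ together with the Legendre condition, and then $\partial_t p=-\partial H/\partial q=\partial L/\partial q$ is exactly the Euler--Lagrange equation. Thus the two systems are equivalent.

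The only real subtlety, and hence the step I would write out most carefully, is the bookkeeping of independent variables: on the Lagrangian side one treats $(q,\bar q,\partial_t q,\partial_{\bar t}\bar q)$ as independent, while on the Hamiltonian side one treats $(q,\bar q,p,\bar p)$ as independent, and the switch requires the invertibility condition $\det((\partial^2 L/\partial(\partial_t q)\partial(\partial_{\bar t}\bar q))^2-\partial^2 L/\partial(\partial_t q)^2\,\partial^2 L/\partial(\partial_{\bar t}\bar q)^2)\neq 0$ stated just before the lemma. Everything else is a direct calculation.
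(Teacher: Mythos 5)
Your proposal is correct and follows essentially the same route as the paper: compute $dH = d(p\del_t q+\bar p\del_{\bar t}\bar q)-dL$ from \cref{eq:dL}, read off $\frac{\del H}{\del p}=\del_t q$ and $\frac{\del H}{\del q}=-\frac{\del L}{\del q}$, and combine with the Euler--Lagrange equations and the definition of $p$. You merely spell out the "follows immediately" step of the paper, including the converse direction via the Legendre-type invertibility condition, which is a fine (and slightly more careful) rendering of the same argument.
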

\begin{proof}
    By the relation (\ref{eq:dL}) above we get that
    \[dL = d(p\del_t q + \bar{p}\del_{\bar{t}}\bar{q}) - dH.\]
    So
    \begin{align*}
        \frac{\del H}{\del q} = -\frac{\del L}{\del q} && \frac{\del H}{\del p} = \del_t q. 
    \end{align*}
    Plugging in the Euler-Lagrange equations and the definition of $p$, the result follows immediately. 
\end{proof}
\begin{example}
    In the example $L=2\del_tq\del_{\bar{t}}\bar{q}-V(q,\bar{q})$ of the Laplace equations, we get $H=\frac{1}{2}p\bar{p}+V(q,\bar{q})$ as before and we recover \cref{eq:complex}.
\end{example}

\section{Cuplength proof}\label{sec:cuplength}
In this section we will prove a cuplength result for solutions to the Hamiltonian equations (\ref{eq:Ham}). Recall that we view $t=t_1+it_2$ and its conjugate as the coordinates on $\T^2$ and $q=q_1+iq_2$ and its conjugate as the coordinates on $\T^{2d}$. Consequently, $Z:=(q,\bar{q},p,\bar{p})$ denote the coordinates on $T^*\T^{2d}$, where $p=p_1+ip_2$. 
\begin{theorem}\label{thm:cuplength}
    Let $H:\T^2\times T^*\T^{2d}\to \R$ be given by $H_{t,\bar{t}}(Z)=\frac{1}{2}p\bar{p}+F_{t,\bar{t}}(Z)$, where $F$ is smooth, real-valued and has finite $C^3$-norm. Then there exist at least $(2d+1)$ solutions to \cref{eq:Ham}.
\end{theorem}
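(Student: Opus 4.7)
The plan is to prove \cref{thm:cuplength} by constructing a Hamiltonian Floer complex whose generators are the solutions of \cref{eq:Ham}, and then running the cuplength argument of Albers \cite{albers2016cuplength} adapted to this polysymplectic setting. Since the cuplength of $\T^{2d}$ equals $2d$, such an estimate yields precisely the advertised $2d+1$ solutions.

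Concretely, I would proceed in four stages. First, construct an action functional $\A_H$ on a suitable Sobolev completion of the space of maps $Z:\T^2\to T^*\T^{2d}$, built from a primitive of the polysymplectic form \cref{eq:poly} paired against $dZ$ together with the Hamiltonian term, whose Euler--Lagrange equations reproduce \cref{eq:Ham}; this parallels \cref{lem:EL}. Second, define Floer trajectories as the formal $L^2$-gradient lines of $\A_H$; these satisfy a first-order PDE on $\R_s \times \T^2$ of schematic form
$$\del_s Z - J_\del Z + \nabla H_{t,\bar{t}}(Z) = 0,$$
which is elliptic thanks to $J_\del^2 = -(\del_1^2 + \del_2^2)$. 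Third, establish uniform $C^0$-bounds on critical points of $\A_H$ directly from $H = \tfrac{1}{2}p\bar{p} + F$ and the $C^3$-norm bound on $F$ (noting that $q$ already lives in the compact $\T^{2d}$), with the analogous bounds on Floer trajectories provided by \cref{sec:Compactness}. Fourth, combine the Fredholm theory of \cref{sec:Fredholm} with the compactness of \cref{sec:Compactness} to see that, for generic $C^2$-small perturbations of $H$, the zero- and one-dimensional components of the moduli spaces of Floer trajectories are smooth manifolds admitting the standard Floer compactification; these data assemble into a well-defined Floer chain complex.

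For the cuplength step, given classes $\alpha_1, \dots, \alpha_k \in H^*(\T^{2d})$ with $\alpha_1 \cup \cdots \cup \alpha_k \neq 0$, one introduces moduli spaces of Floer trajectories with $k+1$ cylindrical ends and intermediate marked points constrained to cycles Poincar\'e-dual to the $\alpha_j$. A standard cobordism argument (as in Albers) shows that if only $k$ critical points of $\A_H$ existed then these zero-dimensional moduli spaces would be empty, contradicting a Piunikhin--Salamon--Schwarz-type identification of the resulting Floer-theoretic cup product with the ordinary cup product on $H^*(\T^{2d})$. Specializing to $k=2d$ then forces $2d+1$ geometrically distinct solutions of \cref{eq:Ham}. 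The main obstacle is the compactness input: the Floer PDE lives on the 3-dimensional domain $\R \times \T^2$ rather than the classical 2-dimensional cylinder, so bubbling and energy-concentration phenomena that do not appear in ordinary Hamiltonian Floer theory must be ruled out by hand, exploiting the flat target geometry (cf.\ the hyperk\"ahler structure of \cref{rem:hyperkahler}) and the specific algebraic structure of $J_\del$; this is the technical heart of \cref{sec:Compactness} and the reason the theorem is stated in the flat-torus setting.
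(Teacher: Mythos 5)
Your stages 1--3 (action functional, $L^2$-gradient trajectories, $C^0$-bounds, Fredholm and compactness input) match the paper, but your stage 4 and the cuplength step rest on machinery that is neither constructed in the paper nor obtainable from \cref{thm:manifold} and \cref{thm:compactness}: a Floer chain complex whose generators are the solutions of \cref{eq:Ham}, together with a Piunikhin--Salamon--Schwarz identification of a Floer cup product with the cup product on $H^*(\T^{2d})$. To define such a complex you would need nondegeneracy of all solutions, a grading/relative index for pairs of critical points of $\A_H$, compactness \emph{up to breaking} and gluing for spaces of connecting trajectories, and then the whole PSS package; the paper's results only give relative $C^\infty_{\loc}$-compactness and transversality for the single parametrized moduli space $\M$ of \cref{eq:ModuliSpace}, whose ends lie on the \emph{constant} maps $\T^{2d}\times\{0\}$ (critical points of the quadratic action $\A_{H^0}$), not on solutions of the nonlinear equation. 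Indeed the introduction states explicitly that the full Floer theory is not yet developed, so taking it as an input is a genuine gap: the step ``these data assemble into a well-defined Floer chain complex'' would fail as written.

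The paper's actual route avoids the complex entirely, following Albers--Hein: one first uses the a priori bound of \cref{lem:Linfty} to replace $F$ by a cut-off $\tilde{F}^\rho$ (\cref{cor:cutoffHam}) and the maximum principle of \cref{lem:Floerbounds} to confine all Floer curves to a compact set $\T^{2d}\times B$ --- a reduction your sketch glosses over but which the compactness proof relies on; then one interpolates via $\tilde{H}^{\rho,r}_{s,t,\bar t}(Z)=\frac{1}{2}p\bar p+\beta_r(s)\tilde{F}^\rho_{t,\bar t}(Z)$, imposes Morse-theoretic incidence conditions at $2d$ marked points along the cylinder, and shows the resulting chain-level operation $\theta_R$ on Morse (co)chains of $\T^{2d}$ is chain homotopic to $\theta_0$, which realizes the ordinary cup product and hence is nonzero because the cuplength of $\T^{2d}$ is $2d$. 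The $2d+1$ distinct solutions of \cref{eq:Ham} are then extracted as $C^\infty_{\loc}$-limits of the curves $Z_n(\cdot+n\alpha)$, with strict drops of $\A_{\tilde{H}^\rho}$ enforced by the stable-manifold constraints. If you want to salvage your outline, replace the Floer-complex/PSS step by this interpolation-and-translation argument, which only needs exactly the Fredholm and compactness statements proved in \cref{sec:Fredholm,sec:Compactness}.
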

\begin{corollary}
    The set of Laplace equations (\ref{eq:complexLaplace}) has at least $(2d+1)$ solutions, when $V_1$ and $V_2$ have finite $C^3$-norm. 
\end{corollary}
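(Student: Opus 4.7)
The plan is to read off the corollary as a direct specialization of \cref{thm:cuplength}. First I would set
\[
F_{t,\bar{t}}(Z) := V(q,\bar{q}) = V_1(q_1) + V_2(q_2),
\]
a time-independent, real-valued, smooth function on $T^*\T^{2d}$ whose $C^3$-norm is controlled by the $C^3$-norms of $V_1$ and $V_2$. The corresponding Hamiltonian
\[
H_{t,\bar{t}}(Z) = \tfrac{1}{2}p\bar{p} + F_{t,\bar{t}}(Z)
\]
is precisely the one arising from the Legendre transform of the Lagrangian $L = 2\del_t q\,\del_{\bar{t}}\bar{q} - V(q,\bar{q})$ in the example at the end of \cref{sec:Lagrangian}. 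Hence the hypotheses of \cref{thm:cuplength} are met, and I would obtain at least $2d+1$ solutions $Z$ of the Hamiltonian equations \cref{eq:Ham}.

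Next I would translate each such Hamiltonian solution into a solution of the complex Laplace equation \cref{eq:complexLaplace}. This is the computation already displayed above \cref{eq:complexLaplace}: from the first line of \cref{eq:complex} one has $\bar{p} = 2\del_t q$, and substituting into the second line yields
\[
-\Delta q = -4\del_{\bar{t}}\del_t q = -2\del_{\bar{t}}\bar{p} = 2\frac{\del V}{\del \bar{q}} = V_1'(q_1) + i V_2'(q_2),
\]
whose real and imaginary parts are exactly the two scalar Laplace equations in the statement.

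Finally, I would need to confirm that the $2d+1$ Hamiltonian solutions give rise to $2d+1$ \emph{distinct} Laplace solutions. Here the key observation is that the assignment $Z \mapsto q$ is injective on the solution set: $\bar{q}$ is the complex conjugate of $q$, while the first line of \cref{eq:complex} reconstructs $\bar{p} = 2\del_t q$ (hence $p$ as well) directly from $q$. Consequently, distinct $Z$'s project to distinct $q$'s, and the $2d+1$ solutions furnished by \cref{thm:cuplength} yield $2d+1$ distinct solutions of the original Laplace system. I do not foresee any real obstacle in carrying this out: the entire content of the corollary is the recognition that \cref{eq:complexLaplace} is the Euler--Lagrange system of a Lagrangian whose Legendre transform falls within the class of Hamiltonians handled by \cref{thm:cuplength}.
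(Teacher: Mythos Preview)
Your proposal is correct and matches the paper's intent: the paper states the corollary without proof, treating it as an immediate specialization of \cref{thm:cuplength} via the identification already worked out in \cref{eq:complex}--\cref{eq:complexLaplace} and the example at the end of \cref{sec:Lagrangian}. Your explicit check that $Z\mapsto q$ is injective on solutions (since $\bar p=2\del_t q$ recovers $p$) is a detail the paper leaves implicit but is exactly the right point to make.
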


\Cref{thm:cuplength} will be proven using a Floer theoretic argument along the lines of \cite{schwarz1998quantum,albers2016cuplength}. Note that similar statements have been proven in \cite{hohloch2009hypercontact,ginzburg2012hyperkahler,ginzburg2013arnold,albers2016cuplength} for closed target manifolds and the proof for maps on a 3-dimensional torus was given in \cite{hyperkahlerRookworst}. In the latter article, the proof was given as a corollary of results from \cite{ginzburg2012hyperkahler} after establishing the necessary $C^0$-bounds. This proof does not use Floer theoretic arguments. In the present article we aim to establish a connection to the Floer theory of these systems and will prove the necessary Fredholm and compactness results (see \cref{sec:Fredholm,sec:Compactness} respectively). As pointed out in \cref{rem:HHS}, the techniques used here should also be of use for studying holomorphic Hamiltonian systems in the sense of \cite{wagner2023pseudo} using Floer theory.

\subsection{$C^0$-bounds}
First, we will prove the necessary $C^0$-bounds. The proofs are analogous to the ones given in \cite{hyperkahlerRookworst}, but as the notation with complex structures is different in the present article and we treat the 2-dimensional torus here, the proofs will be briefly be repeated below. First of all, define the action by
\begin{align}\label{eq:hamaction}\A_H(Z)=\int_{\T^2} \left(p\del_t q+\bar{p}\del_{\bar{t}}\bar{q}-H_{t,\bar{t}}(Z)\right)\dV,\end{align}
where $\dV=-\frac{dt\wedge d\bar{t}}{2i}$ is the standard volume form on $\T^2$. In the case that $H$ actually comes from a Lagrangian as in \cref{sec:Lagrangian}, this action coincides with \cref{eq:action}. We denote 
\begin{align*}
    \mathcal{P}(H)&=\{Z:\T^2\to T^*\T^{2d}\mid Z\textup{ is nullhomotopic and satisfies \cref{eq:Ham}}\}\\
    \mathcal{P}_a(H)&=\{Z\in\mathcal{P}(H)\mid a\geq \A_H(Z)\}.
\end{align*}

\begin{lemma}\label{lem:Linfty}
    Let $H_{t,\bar{t}}(Z)=\frac{1}{2}p\bar{p}+F_{t,\bar{t}}(Z)$, where $F$ is smooth, real-valued and has finite $C^2$-norm. Then for any $a\in\R$ the set $\mathcal{P}_a(H)$ is bounded in $L^\infty$. 
\end{lemma}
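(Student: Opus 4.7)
The plan is a two-step elliptic bootstrap. Since $q$ takes values in the compact torus $\T^{2d}$, its $L^\infty$ norm is automatically controlled by the diameter of the target; the entire burden of the lemma is to bound the momentum component $p$ uniformly in $L^\infty$. I would first extract an $L^2$ bound on $p$ from the action bound $\A_H(Z)\le a$, and then upgrade this to an $L^\infty$ bound via elliptic regularity for the Cauchy--Riemann operator $\del_{\bar t}$, applied to the second Hamilton equation of \cref{eq:Ham}.

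For the $L^2$ estimate, I substitute the Hamilton equations $\del_t q=\frac{1}{2}\bar p+\del_p F$ and $\del_{\bar t}\bar q=\frac{1}{2}p+\del_{\bar p}F$ into the action \cref{eq:hamaction}. A direct computation yields, for any $Z\in\mathcal{P}(H)$,
\begin{align*}
\A_H(Z)=\int_{\T^2}\left(\tfrac{1}{2}|p|^2+p\,\del_p F+\bar p\,\del_{\bar p}F-F\right)\dV.
\end{align*}
Since $\|F\|_{C^1}\le\|F\|_{C^2}<\infty$, the last three terms are pointwise dominated by $C(1+|p|)$ for a constant $C=C(\|F\|_{C^2})$. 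Combined with $\A_H(Z)\le a$ and Young's inequality, this gives a uniform bound $\|p\|_{L^2(\T^2)}\le C'(a,\|F\|_{C^2})$.

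To pass from $L^2$ to $L^\infty$, I exploit the conjugate of the second Hamilton equation. Since $\frac{1}{2}p\bar p$ does not depend on $\bar q$, we have
\begin{align*}
\del_{\bar t}\bar p=-\frac{\del H}{\del\bar q}=-\frac{\del F}{\del\bar q},
\end{align*}
whose right-hand side is pointwise bounded by $\|F\|_{C^1}$, hence lies in $L^\infty(\T^2)$. I would decompose $\bar p=c+\widetilde p$, where $c$ is the mean of $\bar p$ over $\T^2$ and $\int_{\T^2}\widetilde p\,\dV=0$; the mean $c$ is then controlled by $\|\bar p\|_{L^2}$ via Cauchy--Schwarz, while $\widetilde p$ solves the elliptic equation $\del_{\bar t}\widetilde p=-\del F/\del\bar q$ with $L^\infty$ right-hand side and zero mean. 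Standard Calder\'on--Zygmund estimates for $\del_{\bar t}$ on $\T^2$ give $\widetilde p\in W^{1,r}(\T^2)$ for every $r<\infty$, and the Sobolev embedding $W^{1,r}(\T^2)\hookrightarrow C^0(\T^2)$ for $r>2$ then yields the $L^\infty$ bound on $\bar p$ in terms of $a$ and $\|F\|_{C^2}$. The main technical point is the careful treatment of the kernel of $\del_{\bar t}$, which is why one splits off the mean before invoking elliptic regularity; the remainder is routine elliptic theory and parallels the 3-dimensional argument of \cite{hyperkahlerRookworst}.
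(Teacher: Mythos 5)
Your argument is correct, and its second half differs genuinely from the paper's. The first step is identical: both you and the paper substitute the Hamilton equations into the action \cref{eq:hamaction} to get $\A_H(Z)=\int_{\T^2}(\tfrac12 p\bar p+p\,\del_p F+\bar p\,\del_{\bar p}F-F)\dV\geq h_0\|p\|_{L^2}^2-h_1$, so the action bound gives a uniform $L^2$ bound on $p$. For the upgrade to $L^\infty$, the paper bootstraps the full map $Z$ in $L^2$-based Sobolev spaces: both Hamilton equations (plus the nullhomotopy assumption, used for an integration by parts) give $\|dZ\|_{L^2}$ in terms of $\|p\|_{L^2}$ and $\|F\|_{C^1}$, a second iteration using $\|F\|_{C^2}$ gives an $H^2$ bound, and $H^2(\T^2)\hookrightarrow L^\infty$ concludes. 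You instead work only with the (conjugated) second equation of \cref{eq:Ham}, $\del_{\bar t}\bar p=-\del F/\del\bar q$, whose right-hand side is in $L^\infty$, split off the mean of $\bar p$ (correctly, since the kernel of $\del_{\bar t}$ on the closed torus consists of constants), control the mean by the $L^2$ bound, and invoke $L^r$ elliptic estimates for $\del_{\bar t}$ plus Morrey embedding for $r>2$. This buys you something: you never need derivative bounds on $q$, you do not use the nullhomotopy hypothesis in this step, and only $\|F\|_{C^1}$ enters, so your proof works under a weaker hypothesis than the stated $C^2$ bound; the price is Calder\'on--Zygmund theory for $\del_{\bar t}$ on the torus, whereas the paper stays within elementary Hilbert-space ($H^1$, $H^2$) estimates and gets uniform $H^2$ bounds on all of $Z$ as a byproduct. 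For a fully self-contained write-up you should state the quantitative estimate you use, namely $\|\widetilde p\|_{W^{1,r}(\T^2)}\leq C_r\|\del_{\bar t}\widetilde p\|_{L^r(\T^2)}$ for mean-zero $\widetilde p$ (e.g.\ via Fourier multipliers on $\T^2$), but this is standard and the argument is sound.
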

\begin{proof}
    We start by proving that $\mathcal{P}_a(H)$ is bounded in $L^2$. Note that for solutions of \cref{eq:Ham} we get
    \begin{align*}
        \A_H(Z) &= \int_{\T^2} \left( p\frac{\del H}{\del p} +\bar{p}\frac{\del H}{\del \bar{p}} -H_{t,\bar{t}}(Z)\right)\dV\\
        &=\int_{\T^2}\left( \frac{1}{2}p\bar{p} +p\frac{\del F}{\del p}+\bar{p}\frac{\del F}{\del \bar{p}}-F_{t,\bar{t}}\right)\dV\\
        &\geq h_0||p||_{L^2}^2-h_1,
    \end{align*}
    for some $h_0>0$ and $h_1\geq 0$. The last inequality comes from the boundedness of $F$ and its derivatives. The assumption $a\geq\A_H(Z)$ now yields a uniform $L^2$-bound on $p$. As $q$ maps into $\T^{2d}$ it is $L^2$-bounded by construction. So indeed $\mathcal{P}_a(H)$ is bounded in $L^2$. 

    We now proceed by a bootstrapping argument. For $Z\in\mathcal{P}_a(H)$ we have\footnote{In the first line we use partial integration, using that $Z$ is nullhomotopic.}
    \begin{align*}
        ||dZ||^2_{L^2} &= 4\int_{\T^2}\left(|\del_t q|^2+|\del_t p|^2\right)\dV\\
        &=4\int_{\T^2}\left(\left|\frac{\del H}{\del p}\right|^2+\left|\frac{\del H}{\del q}\right|^2\right)\dV\\
        &\leq ||p||_{L^2}^2 + 4\int_{\T^2}\left(\left|\frac{\del F}{\del q}\right|^2+\left|\frac{\del F}{\del p}\right|^2\right)\dV.
    \end{align*}
    As the first derivatives of $F$ are bounded and $p$ is uniformly bounded in $L^2$ this induces a uniform $L^2$-bound on $dZ$. Therefore $\mathcal{P}_a(H)$ is bounded in $H^1$. A similar argument shows that the $H^1$-norm of $dZ$ is bounded by the $H^1$-norm of $Z$ and the $C^2$-norm of $F$. Thus, we get that $\mathcal{P}_a(H)$ is bounded in $H^2$. As $2\cdot 2>2$ we get boundedness in $L^\infty$ by Sobolev's embedding theorem. 
\end{proof}

From \cref{lem:Linfty} it follows that we can actually use a cut-off version of $F$. Let $\chi_\rho:[0,\infty)\to\R$ a smooth cut-off function such that $\chi_\rho\equiv 1$ on $[0,\rho-1]$ and $\chi_\rho\equiv 0$ on $[\rho,\infty)$. We can define the cut-off Hamiltonian as $\tilde{H}^\rho_{t,\bar{t}}(Z):=\frac{1}{2}p\bar{p}+\tilde{F}^\rho_{t,\bar{t}}(Z):=\frac{1}{2}p\bar{p}+\chi_\rho(|p|^2)F_{t,\bar{t}}(Z)$.
\begin{corollary}\label{cor:cutoffHam}
    For any $a\in\R$ there exists some $\rho>0$ such that $\mathcal{P}_a(H)=\mathcal{P}_a(\tilde{H}^\rho)$.
\end{corollary}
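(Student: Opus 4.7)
The plan is to apply \cref{lem:Linfty} to both $H$ and to the cut-off Hamiltonian $\tilde{H}^\rho$, and then to choose $\rho$ large enough that both resulting uniform $L^\infty$-bounds on $|p|^2$ lie strictly below $\rho-1$. In that regime $\chi_\rho(|p|^2)\equiv 1$ with vanishing derivative along every solution of interest, so $\tilde{H}^\rho$ agrees with $H$ together with its gradient along any such $Z$; the two Hamiltonian equations and their actions then coincide along such maps, and both inclusions are immediate.

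Concretely, \cref{lem:Linfty} applied to $H$ yields a constant $C_1>0$ with $\|Z\|_{L^\infty}\leq C_1$ for every $Z\in\mathcal{P}_a(H)$. Since $\tilde{F}^\rho=\chi_\rho(|p|^2)F$ is smooth, real-valued and of finite $C^2$-norm for each fixed $\rho$, the same lemma applies to $\tilde{H}^\rho$ and produces some $C_2(\rho)>0$ with $\|Z\|_{L^\infty}\leq C_2(\rho)$ for every $Z\in\mathcal{P}_a(\tilde{H}^\rho)$. If $\rho$ is chosen so that $\max\{C_1^2,C_2(\rho)^2\}<\rho-1$, then any $Z\in\mathcal{P}_a(H)$ has $|p(t)|^2\leq C_1^2<\rho-1$ for all $t\in\T^2$, hence $\chi_\rho(|p|^2)\equiv 1$ along $Z$, so $Z$ solves the $\tilde{H}^\rho$-equations with the same action value, giving $Z\in\mathcal{P}_a(\tilde{H}^\rho)$. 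Symmetrically, every $Z\in\mathcal{P}_a(\tilde{H}^\rho)$ satisfies $|p|^2\leq C_2(\rho)^2<\rho-1$ pointwise and therefore lies in $\mathcal{P}_a(H)$.

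The main obstacle is to establish the existence of such a $\rho$, i.e., the inequality $C_2(\rho)^2<\rho-1$. Naively, $C_2(\rho)$ grows with $\rho$: the $p$-derivatives of $\chi_\rho(|p|^2)$ can pick up factors of $|p|$ as large as $\sqrt{\rho}$ on the transition band $|p|^2\in[\rho-1,\rho]$, thereby inflating $\|\tilde{F}^\rho\|_{C^2}$. I would therefore sharpen the estimates by exploiting that the $\rho$-dependent contributions are supported only on that transition band: its measure is at most $\|p\|_{L^2}^2/(\rho-1)$ by Chebyshev's inequality, and the integrand is pointwise bounded using $|p|^2\leq\rho$ there. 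Combining these two observations in each step of the bootstrap of \cref{lem:Linfty}, the cutoff corrections can be absorbed into $\|p\|_{L^2}^2$ with a coefficient that vanishes as $\rho\to\infty$, so one obtains an $L^\infty$-bound $C_2$ independent of $\rho$ for $\rho$ large. Choosing $\rho>\max\{C_1^2,C_2^2\}+1$ then makes the desired inequality automatic and completes the proof.
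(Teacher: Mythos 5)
Your first two paragraphs are exactly the argument the paper intends (it gives no written proof beyond ``From \cref{lem:Linfty} it follows\dots''): by \cref{lem:Linfty} every $Z\in\mathcal{P}_a(H)$ satisfies a uniform bound $|p|^2\le C_1^2$, and once $\rho>C_1^2+1$ the cut-off and its derivatives are inactive along such $Z$, so equations and action values agree. You are also right to flag that the reverse inclusion $\mathcal{P}_a(\tilde{H}^\rho)\subseteq\mathcal{P}_a(H)$ needs an $L^\infty$-bound for solutions of the cut-off system that does not outgrow $\rho$ --- a uniformity issue the paper passes over silently, since $\|\tilde{F}^\rho\|_{C^2}$ genuinely grows with $\rho$ for the width-one cut-off $\chi_\rho$ as defined.

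The gap is in your proposed repair. First, the Chebyshev absorption does not give a coefficient vanishing as $\rho\to\infty$: the cut-off contribution to the integrand in the action estimate is $2\chi_\rho'(|p|^2)|p|^2F$, which is bounded by $4\|F\|_{C^0}|p|^2$ on the transition band, so after integration (with or without the band-measure bound $\|p\|_{L^2}^2/(\rho-1)$) you obtain a term of size roughly $4\|F\|_{C^0}\|p\|_{L^2}^2$; this can only be absorbed into $\tfrac12\|p\|_{L^2}^2$ if $\|F\|_{C^0}$ is small, which is not assumed. Second, the $H^1\to H^2$ step of the bootstrap involves $\textup{Hess}\,\tilde{F}^\rho$, which on the band is of order $\rho$ (from $\chi_\rho''(|p|^2)|p|^2$) and multiplies $|dZ|^2$ rather than $|p|^2$, so a Chebyshev bound on $\{|p|^2\ge\rho-1\}$ does not control it; run naively, these estimates yield bounds growing like a power of $\rho$, not a $\rho$-independent $C_2$. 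The clean fix is not sharper analysis but a different cut-off profile: let $\chi_\rho$ make its transition over an interval of length comparable to $\rho$ (say $\chi_\rho\equiv1$ on $[0,\rho/2]$, $\chi_\rho\equiv0$ on $[\rho,\infty)$, with $|\chi_\rho'|\lesssim\rho^{-1}$, $|\chi_\rho''|\lesssim\rho^{-2}$). Then $\|\tilde{F}^\rho\|_{C^2}$ is bounded by a multiple of $\|F\|_{C^2}$ uniformly in $\rho$, \cref{lem:Linfty} applies to $\tilde{H}^\rho$ with $\rho$-independent constants $C_2$, and your choice $\rho>\max\{C_1^2,C_2^2\}+1$ finishes the proof; this modification is harmless for the later arguments (e.g.\ \cref{lem:Floerbounds} only uses that $\tilde{H}^\rho$ is purely quadratic for $|p|^2>\rho$). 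A maximum-principle argument as in \cref{lem:Floerbounds} alternatively shows $|p|^2\le\rho$ for any nullhomotopic solution of the cut-off equations, but that alone does not place $|p|^2$ below the threshold where the cut-off is inactive, so it cannot replace the uniform estimate.
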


\subsection{Floer curves}
Now that we have the necessary bounds on the orbits, we turn to Floer curves. A Floer curve is defined as a negative $L^2$-gradient flow line of the action (\ref{eq:hamaction}). Note that in the complex notation the $L^2$-inner product is given by
\[\langle X,Y\rangle_{L^2} = \frac{1}{2}\int_{T^2}\left(X_q Y_{\bar{q}}+X_{\bar{q}}Y_q+X_pY_{\bar{p}}+X_{\bar{p}}Y_p\right)\dV.\]
It follows that the Floer curves are given by
\begin{alignat}{4}
    \frac{1}{2}\del_s q-\del_{\bar{t}}\bar{p} &= \frac{\del \tilde{H}^\rho}{\del \bar{q}} && \qquad && \frac{1}{2}\del_s \bar{q}-\del_t p &= \frac{\del \tilde{H}^\rho}{\del q}\nonumber\\
    \frac{1}{2}\del_s p+\del_{\bar{t}}\bar{q} &= \frac{\del \tilde{H}^\rho}{\del \bar{p}} && \qquad && \frac{1}{2}\del_s \bar{p}+\del_t q &= \frac{\del \tilde{H}^\rho}{\del p},\label{eq:Floer}
\end{alignat}
where $Z=(q,\bar{q},p,\bar{p}):\R\times\T^2\to T^*\T^{2d}$ and we have taken the Hamiltonian to be $\tilde{H}^\rho$ in view of \cref{cor:cutoffHam}. Note that $s\in\R$ is a real coordinate, so that $\del_s$ denotes the ordinary derivative of functions on $\R$, whereas $\del_t$ and $\del_{\bar{t}}$ still denote the complex derivative and its conjugate on $\T^2$.

\begin{remark}\label{rem:Fueter}
    Note that in the notation of \cref{rem:hyperkahler} we can write the Floer equation with vanishing Hamiltonian as $\del_s Z +J_1\del_1 Z+J_2\del_2 Z=0$. This is equivalent to the Fueter equation
    \[J_3\del_s Z + J_2\del_1 Z - J_1\del_2 Z = 0,\]
    where $J_3=J_1J_2$ is the third complex structure from the hyperk\"ahler structure on the target space.
\end{remark}

The following lemma shows that the image of a Floer curve actually sits within a compact subset of $T^*\T^{2d}$ (compare with Lemma 3.4 from \cite{hyperkahlerRookworst}).
\begin{lemma}\label{lem:Floerbounds}
    Let $Z:\R\times\T^2\to T^*\T^{2d}$ be a solution to \cref{eq:Floer} which converges to critical points of $\A_{\tilde{H}^\rho}$ for $|s|\to\infty$. Then $|p(s,t)|^2\leq \rho$ for any $(s,t)\in\R\times\T^2$.
\end{lemma}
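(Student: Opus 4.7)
The strategy is a maximum-principle argument for $\phi := |p|^2 = p\bar p$ on $\R\times\T^2$. The key observation is that on the open set $U := \{|p|^2 > \rho\}$, the cutoff $\chi_\rho$ and all its derivatives vanish, so $\tilde H^\rho = \tfrac12 p\bar p$ holds on $U$ together with all first partial derivatives. Consequently, on $U$ the Floer equations \cref{eq:Floer} decouple into
\begin{align*}
\del_s q = 2\del_{\bar t}\bar p,\quad \del_s\bar q = 2\del_t p,\quad \del_s p = p - 2\del_{\bar t}\bar q,\quad \del_s\bar p = \bar p - 2\del_t q.
\end{align*}
Differentiating the third equation in $s$ and substituting the second yields $\del_s^2 p + 4\del_t\del_{\bar t} p = \del_s p$, i.e.\ $\Delta p = \del_s p$ with $\Delta := \del_s^2 + \del_{t_1}^2 + \del_{t_2}^2$ the flat Laplacian on $\R\times\T^2$. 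The product rule together with the conjugate identity then gives the subsolution estimate
\begin{align*}
(\Delta - \del_s)\phi = 2|\nabla p|^2 \geq 0 \quad\text{on } U,
\end{align*}
for the uniformly elliptic operator $L := \Delta - \del_s$, which has no zero-order term.

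Next, I would control the ends $s\to\pm\infty$. Each limit $Z^\pm$ is a critical point of $\A_{\tilde H^\rho}$, so the time-independent analogue of the computation above applies: on $\{|p^\pm|^2>\rho\}\subset\T^2$, Hamilton's equations give $\del_t p^\pm = 0$, hence $\del_t\del_{\bar t}|p^\pm|^2 = |\del_{\bar t}p^\pm|^2 \geq 0$, so $|p^\pm|^2$ is subharmonic there. Compactness of $\T^2$ and the strong maximum principle then force $|p^\pm|^2\leq\rho$, and by uniform convergence of $Z(s,\cdot)$ to $Z^\pm$ we get $\phi(s,\cdot)\to|p^\pm|^2\leq\rho$ as $|s|\to\infty$.

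Finally, I would assume toward contradiction that $M := \sup_{\R\times\T^2}\phi > \rho$. The limit analysis just obtained implies $M$ must be attained at some interior point $(s_0,t_0)\in U$. Applying Hopf's strong maximum principle for $L$ on the connected component $E$ of $U$ containing $(s_0,t_0)$ then forces $\phi\equiv M$ on $E$ and, by continuity, on $\overline E$. But any point of $\partial E\cap(\R\times\T^2)$ satisfies $\phi=\rho<M$, and at the ends of $E$ (if $E$ is unbounded in $s$) one has $\phi\to|p^\pm|^2\leq\rho<M$; both alternatives contradict $\phi\equiv M$ on $\overline E$, so $M\leq\rho$ and the lemma follows. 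The principal subtlety in this plan is the non-compactness of the $s$-direction, which is handled cleanly by the separate subharmonicity argument for the limit critical points $Z^\pm$ on the compact torus.
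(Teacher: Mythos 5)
Your proposal runs along the same lines as the paper's proof: on the region $\{|p|^2>\rho\}$ the cutoff removes $\tilde{F}^\rho$, the Floer equations decouple to give $\del_s^2 p+4\del_t\del_{\bar{t}}p=\del_s p$, hence $(\del_s^2+4\del_t\del_{\bar{t}}-\del_s)|p|^2\geq 0$, and the conclusion comes from the maximum principle together with the behaviour as $|s|\to\infty$. You are in fact more explicit than the paper, which only argues locally near a point with $|p|^2>\rho$ and leaves the asymptotics implicit, whereas you run the Hopf argument on a full connected component of $U$ and treat the limits $Z^\pm$ separately; your interior dichotomy (boundary point of the component versus unbounded component controlled by the ends) is correct.

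There is, however, one incomplete step: the claim that ``compactness of $\T^2$ and the strong maximum principle force $|p^\pm|^2\leq\rho$'' for the limiting critical points. The strong maximum principle only produces a contradiction when the component of $\{|p^\pm|^2>\rho\}$ has non-empty boundary in $\T^2$; it does not exclude the case $\{|p^\pm|^2>\rho\}=\T^2$, in which subharmonicity merely makes $|p^\pm|^2$ a constant larger than $\rho$. Such solutions of the cut-off Hamiltonian equations genuinely exist among non-contractible maps (take $p^\pm$ a suitable nonzero constant and $q^\pm$ affine in $t$ with integer winding), so an extra ingredient is needed to rule them out. The fix is short: if $|p^\pm|^2>\rho$ everywhere, then $\del_t p^\pm\equiv 0$, so $p^\pm$ is antiholomorphic on the compact torus and hence constant, and the remaining equation $\del_t q^\pm=\tfrac12\bar{p}^\pm$ is impossible for a nullhomotopic $q^\pm$ (lift $q^\pm$ to $\R^{2d}$ and integrate $\del_t$ of the lift over $\T^2$ to get $\bar{p}^\pm=0$, contradicting $|p^\pm|^2>\rho>0$); recall that the critical points in $\mathcal{P}(\tilde{H}^\rho)$, and the maps making up the moduli space, are nullhomotopic by definition, and in the application condition $(\star 3)$ even gives $p^\pm\equiv 0$ directly. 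With this patch your argument is complete.
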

\begin{proof}
    Suppose $|p(s_0,t_0)|^2>\rho$ for some $(s_0,t_0)\in\R\times \T^2$. Locally around $(s_0,t_0)$ the Hamiltonian reduces to $\tilde{H}^\rho(Z(s,t))=\frac{1}{2}p(s,t)\bar{p}(s,t)$. Therefore, on this neighbourhood
    \begin{align*}
        \del_s^2 p &=\del_s p - 2\del_s\del_{\bar{t}}\bar{q}\\
        &= \del_s p - 4\del_t\del_{\bar{t}} p.
    \end{align*}
    In the second equality, we use that $\del_s \bar{q}=2\del_t p$. It follows that 
    \begin{align*}(\del_s^2 +\Delta)p = \del_s^2 p + 4\del_t\del_{\bar{t}}p=\del_s p.\end{align*}

    Define $\phi=\frac{1}{2}p\bar{p}$. Then
    \begin{align*}
        (\del_s^2+\Delta)\phi &= |\del_s p|^2 + 2|\del_t\bar{p}|^2+2|\del_t p|^2 + \frac{1}{2}p\del_s \bar{p} +\frac{1}{2}\bar{p}\del_s p\\
        &\geq \del_s \phi.
    \end{align*}
    The maximum principle now tells us that $\phi$ cannot attain a maximum in the neighbourhood of $(s_0,t_0)$, contradicting the assumption that the Floer curve converges as $|s|\to\infty$. 
\end{proof}

\subsection{Proof of \cref{thm:cuplength}}
Having established the $C^0$-bounds we can prove \cref{thm:cuplength}. The necessary Fredholm and compactness results will be deferred to \cref{sec:Fredholm,sec:Compactness}. For the proof we will follow the lines of the paper \cite{albers2016cuplength}. 

To start the proof, assume there are finitely many solutions to \cref{eq:Ham}, since otherwise we would be done. Then their action is necessarily bounded. \Cref{cor:cutoffHam} implies that we may replace $H$ by $\tilde{H}^\rho$ for some $\rho$ and by \cref{lem:Floerbounds} also the Floer curves will take value in some compact set $\T^{2d}\times B\subseteq T^*\T^{2d}$. Let $M\subseteq C^\infty(\T^2,\T^{2d}\times B)$ be the subset of nullhomotopic maps. Just like in \cite[\pp 3.1]{albers2016cuplength} we define a smooth family of functions $\beta_r:\R\to[0,1]$ for $r\geq 0$ such that 
\begin{itemize}
    \item $\beta_r(s)=0$ for $s\leq -1$ and $s\geq (2d+1)r+1$ for all $r\geq 0$,
    \item $\beta_r|_{[0,(2d+1)r]}\equiv 1$ for $r\geq 1$,
    \item $0\leq \beta_r'(s)\leq 2$ for $s\in(-1,0)$ and \\$0\geq \beta_r'(s)\geq -2$ for $s\in((2d+1)r,(2d+1)r+1)$,
    \item $\lim_{r\to 0^+}\beta_r=0$ in the strong topology on $C^\infty$.
\end{itemize}
See \cite{albers2016cuplength} for the precise conditions on $\beta_r$ and \cref{fig:beta} for an idea of what they look like. We will use the functions $\beta_r$ to interpolate between the quadratic Hamiltonian with and without non-linearity. To this extent define
\[
\tilde{H}^{\rho,r}_{s,t,\bar{t}}(Z)=\frac{1}{2}p\bar{p}+\beta_r(s)\tilde{F}^\rho_{t,\bar{t}}(Z).
\]
One can see the Hamiltonian $\tilde{H}^{\rho,r}$ as the quadratic Hamiltonian $H^0(Z)=\frac{1}{2}p\bar{p}$ with the non-linearity $\tilde{F}^\rho$ switched on just on an interval whose length is determined by $r$. Correspondingly, we have the action $G_{r,s}:=\A_{\tilde{H}^{\rho,r}}$, interpolating between the actions $\A_{H^0}$ and $\A_{\tilde{H}^\rho}$. 
\begin{figure}
    \centering
    \includegraphics[width=0.8\linewidth]{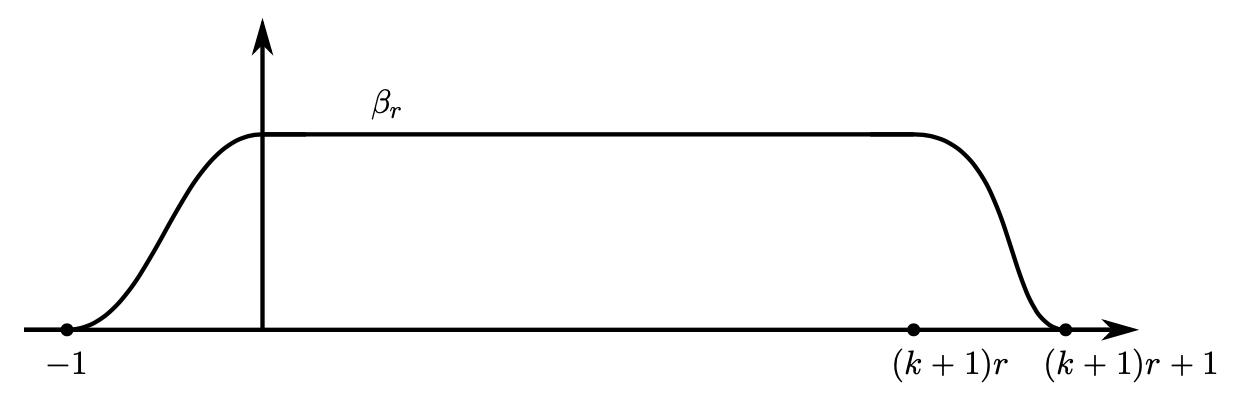}
    \caption{The functions $\beta_r$. Source: \cite{albers2016cuplength}. In our case we take $k=\textup{cl}(\T^{2d})=2d$.}
    \label{fig:beta}
\end{figure}

With this setup we can introduce the moduli space
\begin{align}\label{eq:ModuliSpace}
    \M = \{(r,Z)\mid r\geq 0, Z:\R\to M, (\star 1)-(\star3)\},
\end{align}
where $(\star1)-(\star3)$ are given below. Note that we can view $Z$ as a contractible map $\R\times\T^2\to\T^{2d}\times B$. The condition $(\star1)$ is that $Z$ is a negative gradient line of $G_{r,s}$, meaning a Floer curve for $\tilde{H}^{\rho,r}$. In formula this means
\begin{align}
    \tag{$\star1$} \frac{1}{2}\del_s Z +\delslash Z = \begin{pmatrix}\del_{\bar{q}}\\\del_q\\\del_{\bar{p}}\\\del_p\end{pmatrix}\tilde{H}^{\rho,r}(Z),
\end{align}
where
\[\delslash = \begin{pmatrix}0&0&0&-\del_{\bar{t}}\\0&0&-\del_t&0\\0&\del_{\bar{t}}&0&0\\\del_t&0&0&0\end{pmatrix}\]
and as before $Z=(q,\bar{q},p,\bar{p})$. Condition $(\star2)$ is finiteness of energy
\begin{align}
    \tag{$\star2$} E(Z) = \int_{-\infty}^\infty\int_{\T^2}|\del_s Z(s,t)|^2\dV\,ds <+\infty
\end{align}
and the final condition is convergence of the $p$-coordinates
\begin{align}
    \tag{$\star3$} p(s,t)\to 0\textup{ as }|s|\to\infty.
\end{align}
We also introduce the subspaces
\begin{align*}
    \M_{[0,R]}(\T^{2d})&:=\{(r,Z)\in\M\mid 0\leq r\leq R\textup{ and }\lim_{s\to\pm\infty}Z(s)\in\T^{2d}\times\{0\}\}\\
    \M_R(\T^{2d})&:=\{Z\mid (R,Z)\in\M_{[0,R]}(\T^{2d})\},
\end{align*}
where $\T^{2d}\times\{0\}\subseteq M$ denotes the set of constant maps. This coincides with the set of critical points of $\A_{H^0}$.  Note that for $R=0$ we get that $\M_0(\T^{2d})$ is equal to this set of critical points. 

It follows from \cref{thm:manifold} and \cref{thm:compactness} that $\M$ is a compact 1-dimensional manifold, possibly after a small perturbation of $F$. Also the spaces $\M_{[0,R]}(\T^{2d})$ and $\M_R(\T^{2d})$ will then be compact manifolds (again after possibly perturbing $F$, see \cite{albers2016cuplength}). We have an evaluation map
\begin{align*}
    \textup{ev}_R:\M_R(Z)&\to (\T^{2d}\times B)^{2d}\\
    Z&\mapsto (Z(R,t_0,\bar{t}_0),Z(2R,t_0,\bar{t}_0),\ldots,Z(2dR, t_0,\bar{t}_0)),
\end{align*}
where $(t_0,\bar{t}_0)\in\T^2$ is some basepoint. For $R=0$ this gives the diagonal embedding $\T^{2d}\hookrightarrow (\T^{2d}\times B)^{2d}$.

The rest of the proof goes analogously to \cite{albers2016cuplength}. Take Morse functions $f_1,\ldots,f_{2d},f_*$ on $\T^{2d}$ and extend to Morse functions $\bar{f}_\alpha$ for $\alpha=1,\ldots,2d$ on $\T^{2d}\times B$ using a positive definite quadratic form on a tubular neighbourhood of $\T^{2d}$ in the direction normal to $\T^{2d}$. We also choose Riemannian metrics $g_1,\ldots,g_{2d},g_*$ on $\T^{2d}\times B$. For critical points $x_\alpha\in\T^{2d}$ of $f_\alpha$ and $x_*^\pm\in\T^{2d}$ of $f_*$ we define
\[
\M(R,x_1,\ldots,x_{2d},x_*^-,x_*^+):=\left\{Z\in\M_R(\T^{2d})\middle\vert (\star4)-(\star6)\right\},
\]
where
\begin{align}
    \tag{$\star4$} \textup{ev}_R&\in\prod_{\alpha}W^s(x_\alpha,\bar{f}_\alpha,g_\alpha)\\
    \tag{$\star5$} \lim_{s\to-\infty}Z(s)&\in W^u(x_*^-,f_*,g_*)\\ 
    \tag{$\star6$} \lim_{s\to+\infty}Z(s)&\in W^s(x_*^+,f_*,g_*).
\end{align}
Here, $W^u$ and $W^s$ denote the unstable and stable manifolds of the critical points for the gradient flows of the indicated Morse functions. Again, by choice of perturbations, we may assume these are all smooth manifolds. Note that $\M(0,x_1,\ldots,x_{2d},x_*^-,x_*^+)=\bigcap_{\alpha}W^s(x_\alpha,f_\alpha,g_\alpha)\cap W^u(x_*^-,f_*,g_*)\cap W^s(x_*^+,f_*,g_*)$.

Define the following operation on Morse co-chain groups
\begin{align*}
\theta_R&:\bigotimes_\alpha CM^*(f_\alpha)\otimes CM_*(f_*)\to CM_*(f_*)\\
x_1\otimes\cdots\otimes x_{2d}\otimes x_*^-&\mapsto \sum_{x^+_*\in\textup{Crit}(f_*)}\#_2\M(R,x_1,\ldots,x_{2d},x_*^-,x_*^+)\cdot x^+_*.
\end{align*}
Here, $\#_2$ denotes the parity of the set if it is discrete and 0 if it not. For generic choices of Morse functions and Riemannian metrics, the map $\theta_0$ realizes the usual cup-product in cohomology (see \cite{cupproduct}). This map does not vanish as the cuplength of $\T^{2d}$ is $2d$. Moreover, $\theta_R$ is chain homotopic to $\theta_0$ and can therefore not vanish either. Thus there have to be critical points $x_\alpha$ of $f_\alpha$ and $x_*^\pm$ of $f_*$ of index at least 1 such that the moduli spaces $\M(n,x_1,\ldots,x_{2d},x_*^-,x_*^+)$ are non-empty for all $n\in\N$. 

Let $Z_n\in\M(n,x_1,\ldots,x_{2d},x_*^-,x_*^+)$ and 
\[
Z^{(\alpha)}=\lim_{n\to\infty}Z_n(\cdot+n\alpha),
\]
for $\alpha=1,\ldots,2d$.
By the compactness discussed in \cref{sec:Compactness} these limits exist in $C^\infty_{loc}(\R,M)$. The $Z^{(\alpha)}$ satisfy the Floer \cref{eq:Floer} for the Hamiltonian $\tilde{H}^\rho$
and they converge to critical points $y^\pm_{\alpha}$ of $\A_{\tilde{H}^\rho}$ for $s\to\pm\infty$ such that
\[
\A_{\tilde{H}^\rho}(y_1^-)\geq \A_{\tilde{H}^\rho}(y_1^+)\geq \A_{\tilde{H}^\rho}(y_2^-)\geq \A_{\tilde{H}^\rho}(y_2^+)\geq\ldots\geq \A_{\tilde{H}^\rho}(y_{2d}^-)\geq \A_{\tilde{H}^\rho}(y_{2d}^+).
\] 
The only thing left to do is show that at least $2d+1$ of these points are different, by showing that the inequalities $\A_{\tilde{H}^\rho}(y_\alpha^-)>\A_{\tilde{H}^\rho}(y_\alpha^+)$ are strict. 

Note that since $\textup{Crit}(\A_{\tilde{H}^\rho})$ is finite by assumption, we may assume critical points of $\A_{\tilde{H}^\rho}$ do not intersect the stable manifolds $W^s(x_\alpha,\bar{f}_\alpha,g_\alpha)$. If $\A_{\tilde{H}^\rho}(y_\alpha^-)=\A_{\tilde{H}^\rho}(y_\alpha^+)$, then $Z^{(\alpha)}$ is constant and therefore a critical point of $\A_{\tilde{H}^\rho}$. Also note that by definition $Z_n(n\alpha)\in W^s(x_\alpha,\bar{f}_\alpha,g_\alpha)$ for any $n$. Therefore $Z^{(\alpha)}(0)=\lim_{n\to\infty}Z_n(n\alpha)$ lies in the closure of a stable manifold, which itself is a union of stable manifolds. This is in contradiction with $Z^{(\alpha)}$ being a critical point of $\A_{\tilde{H}^\rho}$. Thus indeed the inequalities $\A_{\tilde{H}^\rho}(y_\alpha^-)>\A_{\tilde{H}^\rho}(y_\alpha^+)$ are strict and we get that $y_1^-,y_2^-,\ldots,y_{2d}^-$ and $y_{2d}^+$ form $2d+1$ different critical points of $\A_{\tilde{H}^\rho}$. These correspond to $2d+1$ distinct solutions of \cref{eq:Ham} \qed


\section{Fredholm theory}\label{sec:Fredholm}
In this section we will prove the following theorem.
\begin{theorem}\label{thm:manifold}
    The moduli space $\M$ from \cref{eq:ModuliSpace} is a 1-dimensional manifold for generic choice of $F$.
\end{theorem}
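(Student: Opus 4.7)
The plan is to realize $\M$ as the zero set of a smooth Fredholm section of a Banach bundle and then apply the implicit function theorem together with Sard-Smale. Fix $k\geq 1$ and $p>2$. I would set up a Banach manifold $\mathcal{B}$ whose points are tuples $(r,Z,Z_-,Z_+)$ with $r\in[0,\infty)$, $Z_{\pm}\in\T^{2d}\times\{0\}\subseteq T^*\T^{2d}$, and $Z:\R\times\T^2\to\T^{2d}\times B$ nullhomotopic, such that $Z$ differs from a smooth interpolation between the constants $Z_-$ and $Z_+$ by an element of a weighted Sobolev space $W^{k,p}_\delta$ with small exponential weight $\delta>0$ in the $s$-direction. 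The extra asymptotic data $(Z_-,Z_+)\in(\T^{2d})^2$ account for the Morse-Bott nature of the critical set of $\A_{H^0}$ at $s=\pm\infty$, where $H^0=\tfrac12 p\bar p$; in this setup the finite-energy condition $(\star 2)$ and the decay condition $(\star 3)$ become automatic. The Floer equation $(\star 1)$ is then a smooth section $\mathcal{F}$ of the Banach bundle $\mathcal{E}\to\mathcal{B}$ whose fiber at $(r,Z,Z_-,Z_+)$ is the corresponding $L^p_\delta$ space of sections.

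Next I would show that the vertical linearization $D\mathcal{F}$ at any solution is Fredholm and compute its index. The principal part $\tfrac12\del_s+\delslash$ is a Dirac-type operator whose square equals $\tfrac14\del_s^2+\del_t\del_{\bar t}$ plus lower order terms, which is elliptic on $\R\times\T^2$; the Hessian of $\tilde H^{\rho,r}$ contributes a zero-order, locally compact perturbation. Freezing $s=\pm\infty$ yields asymptotic operators whose kernels are exactly the tangent spaces to the critical manifold $\T^{2d}\times\{0\}$, so for $\delta$ smaller than the first nonzero eigenvalue of these asymptotic operators in the normal directions, $D\mathcal{F}$ becomes Fredholm between the weighted spaces. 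An index computation along the lines of Robbin-Salamon, or equivalently a spectral flow argument, taking into account the $2\cdot 2d$ free asymptotic parameters $(Z_-,Z_+)\in(\T^{2d})^2$ and the one-dimensional parameter $r\geq 0$, then yields total Fredholm index equal to~$1$.

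For transversality I would enlarge the problem to the universal moduli space obtained by letting $F$ vary in a separable Banach space $\mathcal{U}$ of smooth perturbations with a Floer-type $C^\varepsilon$-norm, and consider the universal section $\mathcal{F}^{\mathrm{univ}}:\mathcal{B}\times\mathcal{U}\to\mathcal{E}$. At any solution $(r,Z,Z_-,Z_+;F)$ with $Z$ not identically constant, the derivative of $\mathcal{F}^{\mathrm{univ}}$ in the $F$-direction spans the cokernel of $D\mathcal{F}$: this follows from the standard construction of bump perturbations supported near a regular point of $Z$, using unique continuation for the Dirac-type operator $\tfrac12\del_s+\delslash$ to rule out vanishing of cokernel elements on an open set (compare \cite{albers2016cuplength}). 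By the Sard-Smale theorem there is a comeager set of $F\in\mathcal{U}$ for which $\M$ is cut out transversally, hence is a smooth manifold of dimension $1$, with boundary stratum $\{r=0\}\cong\T^{2d}$ of constant trajectories.

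The main obstacle will be the transversality step for trajectories that are close to constant: when $r>0$ is small, $\beta_r$ is supported on a short interval and Floer trajectories may be nearly constant, so the bump-perturbation trick threatens to degenerate because the trajectory then sits arbitrarily close to the critical manifold. A careful argument in the spirit of \cite{albers2016cuplength} is needed to show that such nearly-constant solutions are automatically regular by virtue of the $r$-parameter itself, or else are absent. A second, more technical point is the compatibility of the weighted Sobolev framework with the finite-dimensional asymptotic evaluation $(Z_-,Z_+)\in(\T^{2d})^2$; this kind of Morse-Bott setup is standard in Floer theory but needs to be verified in our Dirac-type (rather than classical Cauchy-Riemann) setting before the above Fredholm and transversality assertions make sense.
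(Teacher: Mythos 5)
Your overall strategy -- realizing $\M$ as the zero set of a Fredholm section, passing to a universal moduli space, producing bump perturbations against a putative cokernel element, and invoking Sard--Smale -- is the same skeleton as the paper's proof, and your transversality plan is essentially theirs (the paper rules out cokernel elements vanishing on the support of $\beta_r$ by a maximum principle for the adjoint operator $\tfrac12\del_s-\delslash-\tfrac12P$ outside $[-1,(2d+1)r+1]$, where you invoke unique continuation; either is fine in spirit). The genuine gap is the index computation, which is precisely where the ``$1$-dimensional'' in the statement has to come from and which you only assert (``Robbin--Salamon \ldots yields total Fredholm index equal to $1$'').

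In the Morse--Bott framework you set up, the bookkeeping does not give $1$. The asymptotic operator is $A=\delslash-\tfrac12P$, which is formally self-adjoint, has a spectral gap, and has kernel equal to the tangent space of the critical manifold $\T^{2d}\times\{0\}$, of dimension $2d$ (you say this yourself). Hence the translation-invariant operator $\tfrac12\del_s+A$ on $\delta$-weighted spaces (decay at both ends) has trivial kernel and cokernel spanned by the constant-in-$s$ sections valued in $\ker A$, so its index is $-2d$; the perturbation $\beta_r(s)S_Z$ is supported in a compact $s$-interval and the path of asymptotics is contractible rel endpoints, so there is no spectral flow. Adding your $4d$ free asymptotic parameters $(Z_-,Z_+)\in(\T^{2d})^2$ and the parameter $r$ gives $-2d+4d+1=2d+1$, not $1$. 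A sanity check confirms this: for $F\equiv 0$ the moduli space defined by $(\star1)$--$(\star3)$ is exactly the $(2d+1)$-dimensional family $[0,\infty)\times\T^{2d}$ of constant solutions, so any correct index count in your setup must return $2d+1$ at these solutions. You therefore need either to identify what in your formulation cuts down the extra $2d$ directions (nothing in $(\star1)$--$(\star3)$ fixes the asymptotic $q$-limits), or to switch to the paper's bookkeeping: the paper works on unweighted $H^k(\R\times\T^2,\R^{4d})$, proves by an explicit Fourier-series argument (\cref{lem:bijection}) that the constant-coefficient operator $D=\tfrac12\del_s+\delslash-\tfrac12P$ is a bijection $H^k\to H^{k-1}$, observes that $\beta_r(s)S_Z$ is compact, and concludes index $0$ at fixed $r$, hence $1$ with $r$. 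Reconciling the two setups is not automatic, precisely because the asymptotic operator is degenerate (the zero modes are the $t$-independent $q$-directions), so weighted and unweighted indices genuinely differ; until you carry out the spectral-flow/Robbin--Salamon computation and resolve this discrepancy, the dimension claim -- the actual content of \cref{thm:manifold} -- is unproved in your proposal.
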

As is usual in Floer theory, this will be proven by describing $\M$ as the zero set of a Fredholm map. The results are standard, so we will not go into full detail. However, we would like to give the proof of the Fredholmness, to stress that in our setting elementary Fourier arguments can be used. 

We let $\F$ denote the Floer map
\[\F(r,Z)=\F^r(Z)=\frac{1}{2}\del_s Z +\delslash Z - \begin{pmatrix}\del_{\bar{q}}\\\del_q\\\del_{\bar{p}}\\\del_p\end{pmatrix}\tilde{H}^{\rho,r}(Z),\]
where as before $Z=(q,\bar{q},p,\bar{p})$. This means that $\M=\F^{-1}(0)$. We start by proving $\F$ is a nonlinear Fredholm map when acting on the appropriate spaces.

\subsection{Linearization of the Floer map}
Let $r$ be fixed. Then the linearization of $\F^r$ at $Z$ is given by
\begin{align}\label{eq:linFloer}
(d\F^r)_Z = D-\beta_r(s)S_Z,
\end{align}
where $D=\frac{1}{2}\del_s+\delslash-\frac{1}{2}P$,
\begin{align*}
    P &= \begin{pmatrix}0&0&0&0\\0&0&0&0\\0&0&1&0\\0&0&0&1\end{pmatrix} &&\textup{and}&& S_Z=\begin{pmatrix}0&1&0&0\\1&0&0&0\\0&0&0&1\\0&0&1&0\end{pmatrix}\textup{Hess}_Z\tilde{F}^\rho.
\end{align*}
By $H^k$ we denote the Sobolev space $W^{k,2}$. Note that for $k>\frac{3}{2}$ we have an embedding $H^k(\R\times\T^2,\R^{4d})\hookrightarrow C^0(\R\times\T^2,\R^{4d})$.
\begin{lemma}\label{lem:bijection}
    The operator $D:H^k(\R\times\T^2,\R^{4d})\to H^{k-1}(\R\times\T^2,\R^{4d})$ is a bijection for $k>\frac{3}{2}$.
\end{lemma}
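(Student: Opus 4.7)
The plan is to invert $D$ explicitly via Fourier analysis: take the Fourier transform in the noncompact variable $s \in \R$ and Fourier series in $t = (t_1, t_2) \in \T^2$, under which $D$ becomes multiplication by a family of $4d \times 4d$ matrices $\hat D(\xi, n)$ indexed by $(\xi, n) \in \R \times \Z^2$. Bijectivity then reduces to showing that these matrices are invertible with inverse of order $-1$, after which Plancherel converts the pointwise matrix bound into the claimed Sobolev estimate.

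The key computation is the symbol itself. Since $\cancel{\partial}$ only couples $q$ with $\bar p$ and $\bar q$ with $p$, the matrix $\hat D(\xi, n)$ block-decomposes into two conjugate $2d \times 2d$ blocks, so it suffices to invert a single $2\times 2$ block per complex coordinate. Using that the Fourier symbols of $\partial_t$ and $\partial_{\bar t}$ satisfy $\sigma_t \sigma_{\bar t} = -|n|^2/4$, a direct determinant calculation gives
\[
\det \hat D_{\text{block}}(\xi, n) = -\tfrac{1}{4}\bigl(\xi^2 + |n|^2 + i\xi\bigr),
\]
whose real and imaginary parts both vanish only at $(\xi, n) = (0,0)$, reflecting the ellipticity of $D$. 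Cramer's rule then gives the decay estimate $|\hat D(\xi, n)^{-1}| \lesssim (1 + |\xi| + |n|)^{-1}$ off the origin, and multiplying by the Sobolev weight $(1 + |\xi|^2 + |n|^2)^{(k-1)/2}$ and applying Plancherel yields the two-sided estimate $\|Z\|_{H^k} \asymp \|DZ\|_{H^{k-1}}$; injectivity and closed range are immediate, and surjectivity follows by defining $Z$ via $\hat Z := \hat D^{-1} \hat f$. The role of $k > 3/2$ will be to secure the Sobolev embedding $H^k(\R\times\T^2) \hookrightarrow C^0$ used later when $D$ is perturbed by the zeroth-order operator $\beta_r(s) S_Z$.

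The main obstacle is the degeneracy of $\hat D$ at the single frequency $(\xi, n) = (0, 0)$. Naively this contributes a $(q, \bar q)$-kernel, but since constants on $\R$ are not $L^2$-integrable and $\{(\xi, n) = (0,0)\}$ has measure zero in the continuous $\xi$-variable, no genuine element of $H^k(\R\times\T^2)$ is actually lost. Carrying this out rigorously requires a careful check that the singular growth of $|\hat D^{-1}|$ near the origin is still square-integrable against the Sobolev weight, and this is where the elementary Fourier bookkeeping is most delicate; one may have to either restrict attention to the orthogonal complement of the $n=0$ zero-$s$-frequency mode or use the noncompactness of the $s$-direction (which kills the apparent pointwise obstructions) to close the argument.
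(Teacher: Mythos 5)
Your strategy is the same as the paper's: take the Fourier transform in $s$ and Fourier series in $t$, compute the symbol of $D$ and its determinant (your block determinant $-\tfrac14(\xi^2+|n|^2+i\xi)$ is consistent with the paper's $\det\hat D(\xi,m_1,m_2)=\tfrac1{16}(m_1^2+m_2^2+\xi^2+i\xi)^2$), deduce injectivity from the fact that the symbol degenerates only at the origin while constants are not square-integrable on $\R\times\T^2$, and then try to define $D^{-1}$ by Fourier division. Up to that point you reproduce the paper's argument.

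The gap is the surjectivity step, which is exactly what you postpone as a ``careful check,'' and it is not mere bookkeeping. The asserted bound $|\hat D(\xi,n)^{-1}|\lesssim(1+|\xi|+|n|)^{-1}$ fails near $(\xi,n)=(0,0)$: at $n=0$ the symbol is diagonal with entries $\tfrac12 i\xi$ on the $(q,\bar q)$-components, so $|\hat D(\xi,0)^{-1}|\sim 2/|\xi|$, and since $|\xi|^{-2}$ is not integrable at $\xi=0$ the ``measure zero'' remark disposes only of the kernel, not of the inverse. Concretely, on $t$-independent data the $(q,\bar q)$-part of $D$ is $\tfrac12\del_s$ acting on functions of $s\in\R$: if $W$ has $q$-component $e^{-s^2}$ (constant in $t$), then averaging any putative solution $Y\in H^k$ over $\T^2$ would give $\tfrac12 a'(s)=e^{-s^2}$ with $a\in H^k(\R)$, which is impossible because the primitive of a Gaussian has two different limits at $\pm\infty$ and so is not in $L^2(\R)$. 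Hence ``injectivity and closed range are immediate'' is incorrect (the range is dense but not closed, so the claimed two-sided estimate $\|Z\|_{H^k}\asymp\|DZ\|_{H^{k-1}}$ cannot hold), and neither of your suggested repairs closes the argument: there is no single mode to project away in the continuous variable $\xi$, and the noncompactness of the $s$-direction is the source of the difficulty rather than its cure (on a circle in $s$ the offending modes would form an honest finite-dimensional kernel and cokernel). Note also that the paper's own surjectivity argument only estimates the projection of $Y$ onto the frequencies $m_1^2+m_2^2>N$, so it does not supply the missing low-frequency analysis either; any complete treatment must handle the $t$-independent $(q,\bar q)$-modes separately, e.g.\ by working in weighted spaces in $s$ or by restricting the admissible right-hand sides, and your proposal would have to be modified in the same way.
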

\begin{proof}
    For $Y\in H^k:=H^k(\R\times\T^2,\R^{4d})$ consider the Fourier expansion
    \begin{align*}
        Y(s,t,\bar{t})&=\int\sum_{m_1,m_2\in\Z}\hat{Y}(\xi,m_1,m_2)e^{i\xi s}e^{im_1t_1}e^{im_2t_2}\,d\xi\\
        &=\int\sum_{m_1,m_2\in\Z}\hat{Y}(\xi,m_1,m_2)e^{i\xi s}e^{\frac{1}{2}t(im_1+m_2)}e^{\frac{1}{2}\bar{t}(im_1-m_2)}\,d\xi.
    \end{align*}
    Then on the Fourier coefficients the operator $D$ acts as 
    \begin{align*}
        \hat{D}(\xi,m_1,m_2) = \frac{1}{2}i\xi + \frac{1}{2}\begin{pmatrix}0&0&0&-im_1+m_2\\0&0&-im_1-m_2&0\\0&im_1-m_2&0&0\\im_1+m_2&0&0&0\end{pmatrix}-\frac{1}{2}P.
    \end{align*}
    We compute $\det(\hat{D}(\xi,m_1,m_2))=\frac{1}{16}(m_1^2+m_2^2+\xi^2+i\xi)^2$. This means only constant functions can be in the kernel of $D$, however the only constant function that is square-integrable on $\R\times\T^2$ is the zero function. So $\ker D = 0$. 

    Now, for surjectivity let $W\in H^{k-1}$ and define 
    \begin{align*}
        \hat{Y}(\xi,m_1,m_2)=\begin{cases}\hat{D}(\xi,m_1,m_2)^{-1}\hat{W}(\xi,m_1,m_2) & (\xi,m_1,m_2)\neq (0,0,0)\\
        0 & (\xi,m_1,m_2)= (0,0,0).\end{cases}
    \end{align*}
    Note that by the calculation of the determinant above $\hat{D}(\xi,m_1,m_2)^{-1}$ exists. Now by definition we have that $DY=W$ provided $Y\in H^k$. To see this, note that the eigenvalues of $\hat{D}(\xi,m_1,m_2)$ are
    \[\lambda^\pm(\xi,m_1,m_2) = \frac{1}{4}i\left(i+2\xi\pm i\sqrt{1+4m_1^2+4m_2^2}\right).\]
    For $m_1^2+m_2^2$ large enough the norms of these eigenvalues can be bounded from below. In other words, there exists some $N$ such that for $m_1^2+m_2^2>N$, we have
    \begin{align*}
       |\lambda^\pm(\xi,m_1,m_2)|^2 \geq \frac{1}{4}\xi^2+\frac{1}{8}m_1^2+\frac{1}{8}m_2^2.
    \end{align*}
    Let $\textup{pr}$ denote the projection onto the subspace where $m_1^2+m_2^2>N$. Then the $H^k$-norm of $\textup{pr}\,Y$ is
    \begin{align*}
        ||\textup{pr}\,Y||_{H^k}^2 &= \int\sum_{m_1^2+m_2^2>N}(1+\xi^2+m_1^2+m_2^2)^k|\hat{Y}(\xi,m_1,m_2)|^2\,d\xi\\
        &\leq \int\sum_{m_1^2+m_2^2>N}\frac{(1+\xi^2+m_1^2+m_2^2)^k}{\frac{1}{4}\xi^2+\frac{1}{8}m_1^2+\frac{1}{8}m_2^2}|\hat{W}(\xi,m_1,m_2)|^2\,d\xi\\
        &\leq |||W||_{H^{k-1}}^2\sup_{m_1^2+m_2^2>N}\frac{1+\xi^2+m_1^2+m_2^2}{\frac{1}{4}\xi^2+\frac{1}{8}m_1^2+\frac{1}{8}m_2^2}<+\infty.
    \end{align*}
    Here, the third line follows from H\"olders inequality. Thus $Y\in H^k$, proving that $D$ is surjective. 
\end{proof}
\begin{corollary}\label{cor:floerfredholm}
    The Floer map $\F$ is a Fredholm map of index 1.
\end{corollary}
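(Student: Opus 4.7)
The plan is to show that the linearization $(d\F^r)_Z = D - \beta_r(s) S_Z$ from \cref{eq:linFloer} is Fredholm of index $0$ for each fixed $r \geq 0$, and then account for the extra one-dimensional parameter $r$ to obtain index $1$ for the full map $\F$.

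First I would invoke \cref{lem:bijection}: the operator $D\colon H^k(\R \times \T^2, \R^{4d}) \to H^{k-1}(\R \times \T^2, \R^{4d})$ is a bijection for $k > 3/2$, hence in particular Fredholm of index $0$. The task therefore reduces to showing that $\beta_r(s) S_Z$ is a compact operator $H^k \to H^{k-1}$, so that $(d\F^r)_Z$ is a compact perturbation of a Fredholm operator and inherits both its Fredholmness and its index.

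For the compactness step, the essential observation is that $\beta_r$ vanishes outside the compact interval $[-1,(2d+1)r+1]$ in $s$, so for any $Y$ the product $\beta_r(s) S_Z Y$ is supported in the compact cylinder $K := [-1,(2d+1)r+1] \times \T^2$. Given a bounded sequence $\{Y_n\} \subset H^k(\R \times \T^2,\R^{4d})$, its restriction to $K$ is bounded in $H^k(K)$, and Rellich--Kondrachov yields a subsequence converging in $H^{k-1}(K)$. Multiplication by $\beta_r(s) S_Z$ is continuous on $H^{k-1}$ because the cut-off $\chi_\rho$ in the definition of $\tilde{F}^\rho$ together with the finite $C^3$-norm of $F$ guarantee that $\textup{Hess}_Z \tilde{F}^\rho$ is uniformly bounded (with sufficient regularity). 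Extending by zero back to $\R \times \T^2$ then shows that $\{\beta_r(s) S_Z Y_n\}$ has a convergent subsequence in $H^{k-1}(\R \times \T^2, \R^{4d})$, establishing compactness.

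Finally, to pass from $(d\F^r)_Z$ to $\F$ itself, I would write the total derivative at $(r,Z) \in [0,\infty) \times H^k$ as
\[
d\F_{(r,Z)}(\dot r, \dot Y) = \dot r \cdot \del_r \F^r(Z) + (d\F^r)_Z \dot Y,
\]
i.e., a bounded rank-one operator in the $r$-direction plus the index-$0$ Fredholm operator $(d\F^r)_Z$ in the $Z$-direction. Adjoining the one-dimensional parameter raises the Fredholm index by $1$, so $\F$ is Fredholm of index $1$. The only nontrivial point is the compactness of $\beta_r(s) S_Z$; everything else is bookkeeping. That compactness is where the cut-off functions $\beta_r$ and $\chi_\rho$ earn their keep, by reducing the problem to a compact cylinder on which Rellich--Kondrachov applies and on which the Hessian of $\tilde{F}^\rho$ is uniformly controlled.
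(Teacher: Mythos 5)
Your argument is correct and follows essentially the same route as the paper: \cref{lem:bijection} makes $D$ Fredholm of index $0$, the perturbation $\beta_r(s)S_Z$ is compact so $(d\F^r)_Z$ keeps index $0$, and adjoining the parameter $r$ raises the index to $1$. The only difference is that you spell out the compactness of $\beta_r(s)S_Z$ (compact support in $s$ plus Rellich--Kondrachov and boundedness of $\textup{Hess}_Z\tilde{F}^\rho$), which the paper simply asserts.
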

\begin{proof}
    Note that the operator $\beta_r(s)S_Z$ is compact as a map $H^k\to H^{k-1}$. Thus, by \cref{eq:linFloer} and \cref{lem:bijection} the differential of $\F^r$ is Fredholm of index 0. Varying $r$ as well, we see that $(d\F)_{(r,Z)}$ must be Fredholm of index 1. 
\end{proof}

\subsection{Transversality}
In this section we will show that for generic choice of non-linearity the differential of the Floer map is surjective. Combined with \cref{cor:floerfredholm} this yields \cref{thm:manifold}. 

Note that the moduli space $\M$ depends on the chosen non-linearity $\beta_r(s)\tilde{F}^\rho_t$. By varying these non-linearities for each $l$, we get the universal moduli space
\[\wt{\M}:=\bigcup_{F\in\S}\M(F_{s,t}),\]
where $\S=C_c^l(\R\times \T^2\times \T^{2d}\times B,\R)$ is the space of compactly supported smooth functions $F_{s,t}$ on $\T^{2d}\times B\subseteq T^*\T^{2d}$. We denote by $\M(F_{s,t})$ the moduli space $\M$ of \cref{eq:ModuliSpace}, where the non-linearity $\beta_r(s)\tilde{F}^\rho_t$ is replaced by $F_{s,t}$. Note that in this case $F_{s,t}$ is not necessarily the product of a function in $s$, with a time-dependent non-linearity on $\T^{2d}\times B$.

This universal moduli space can be seen as the zero set of the map
\[\wt{\F}(r,Z,F)=\wt{\F}^r(Z,F)=\frac{1}{2}\del_s Z +\delslash Z - \begin{pmatrix}\del_{\bar{q}}\\\del_q\\\del_{\bar{p}}\\\del_p\end{pmatrix}H^F_{s,t}(Z),\]
where $H^F_{s,t}(Z)=\frac{1}{2}p\bar{p}+F_{s,t}(Z)$ and $F$ vanishes for $s\notin [-1,(2d+1)r+1]$. The result would follow from Sard-Smale's theorem, if we know that the differential of $\wt{\F}^r$ is surjective at every $(Z,F)\in\wt{\M}$. Let $(Z,F)\in\wt{\M}$ and take a variation $(Y,G)\in H^k(\R\times \T^2,\R^{4d})\times C^l_c(\R\times\T^2\times\T^{2d}\times B,\R)$. Then the linearization of $\wt{\F}^r$ at $(Z,F)$ can be decomposed as 
\begin{align*}
    (d\wt{\F}^r)_{(Z,F)}\cdot(Y,G) = D^1_F G +D^2_Z Y.
\end{align*}
Assume we have some non-zero $W\in H^{k-1}(\R\times \T^2,\R^{4d})$ in the orthogonal complement of the image of $(d\wt{\F}^r)_{(Z,F)}$. Then $\langle W,D^2_Z Y\rangle=0$ for all $Y$, so that $W\in \ker \left((D^2_Z)^*\right)$. Note that for $s\notin[-1,(2d+1)r+1]$ the non-linearity $F_{s,t}$ vanishes so that $(D^2_Z)^*=\frac{1}{2}\del_s-\delslash-\frac{1}{2}P$. This operator satisfies a maximum principle, similar to the one used in \cref{lem:Floerbounds}. Therefore, by standard arguments $W(s,t)\neq 0$ for some $(s,t)\in [-1,(2d+1)r+1]\times \T^2$. Thus, we can find some variation $G$ with support for $s\in[-1,(2d+1)r+1]$, such that $\langle W,D^1_F G\rangle \neq 0$. This contradicts the assumption that $\langle W,(d\wt{\F}^r)_{(Z,F)}\cdot(Y,G)\rangle=0$ for any $(Y,G)$. We must conclude $(d\wt{\F}^r)_{(Z,F)}$ is surjective. This finishes the proof of \cref{thm:manifold}.\qed

\section{Compactness}\label{sec:Compactness}
In this section we will prove compactness of the moduli space (\ref{eq:ModuliSpace}) that is needed in the proof of \cref{thm:cuplength}. We start by proving a uniform energy bound, using a standard argument.
\begin{lemma}\label{lem:Energybound}
    For all $(r,Z)\in\mathcal{M}$ we have 
    \begin{align*}
        E(Z)\leq 2||\tilde{F}^\rho||_{\textup{Hofer}}:=2\int_{\T^2}\left(\sup_{Z'\in T^*\T^{2d}}\tilde{F}^\rho_t(Z')-\inf_{Z'\in T^*\T^{2d}}\tilde{F}^\rho_t(Z')\right)\dV.
    \end{align*}
\end{lemma}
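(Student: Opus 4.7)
The plan is the standard ``energy equals action drop'' identity for gradient flows of $s$-dependent action functionals, combined with the cut-off function $\beta_r$ to convert the remaining correction term into a Hofer norm.

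First, I differentiate $G_{r,s}(Z(s)) := \A_{\tilde H^{\rho,r}_{s,t,\bar t}}(Z(s))$ along the trajectory. Since $(\star 1)$ identifies $Z$ as (a multiple of) the negative $L^2$-gradient of $G_{r,\cdot}$, the chain rule gives
\[
\tfrac{d}{ds}G_{r,s}(Z(s)) = (\partial_s G_{r,s})(Z(s)) - c\,|\partial_s Z(s,\cdot)|_{L^2}^2,
\]
where $c>0$ is a normalization constant fixed by $(\star 1)$ and the $L^2$-inner product from Section~3.2; this constant is precisely what yields the factor $2$ in the final estimate.

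Second, I integrate over $s\in\R$. Because $\beta_r$ has compact support, $G_{r,\pm\infty} = \A_{H^0}$ with $H^0(Z) = \tfrac{1}{2} p\bar p$. Condition $(\star 3)$ forces the limits $Z(\pm\infty)$ to be constants with $p\equiv 0$; on any such constant every term in the integrand of $\A_{H^0}$ vanishes. Hence the boundary contribution $-[G_{r,s}(Z(s))]_{-\infty}^{+\infty}$ is zero, and since $\tilde H^{\rho,r}$ depends on $s$ only through $\beta_r(s)$, only the correction term survives:
\[
E(Z) = -\tfrac{1}{c}\int_\R \beta_r'(s)\int_{\T^2}\tilde F^\rho_{t,\bar t}(Z(s,t))\,dV\,ds.
\]

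Third, I estimate by splitting on the sign of $\beta_r'$. On $(-1,0)$, where $\beta_r'\ge 0$, the integrand is bounded by $-\beta_r'(s)\inf_{Z'}\tilde F^\rho_{t,\bar t}(Z')$; on $((2d+1)r,(2d+1)r+1)$, where $\beta_r'\le 0$, it is bounded by $-\beta_r'(s)\sup_{Z'}\tilde F^\rho_{t,\bar t}(Z')$. The identities $\int_{-1}^0\beta_r' = 1$ and $\int_{(2d+1)r}^{(2d+1)r+1}\beta_r' = -1$ collapse the two pieces into exactly $\int_{\T^2}(\sup-\inf)\tilde F^\rho_{t,\bar t}\,dV = \|\tilde F^\rho\|_{\text{Hofer}}$, and multiplying by $1/c$ gives the claimed bound.

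The only potentially delicate point is confirming that $G_{r,s}(Z(s))$ really does have well-defined limits as $s\to\pm\infty$; this follows from $(\star 2)$ (finite energy) and $(\star 3)$ together with the uniform $L^\infty$-control of \cref{lem:Floerbounds}. Beyond that, the argument is routine and exactly parallels the classical Hofer-Floer energy estimate.
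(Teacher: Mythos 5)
Your argument is correct and is essentially the paper's own proof: you write the energy as the action drop along the flow plus the $\partial_s$-correction coming from $\beta_r$, observe the boundary contributions vanish at the asymptotic constants with $p=0$, and bound the surviving $\beta_r'$-term by the Hofer norm of $\tilde{F}^\rho$. The only cosmetic difference is the bookkeeping of the factor $2$: the paper implicitly takes the gradient-flow normalization to be $1$ and obtains the $2$ by estimating the two bumps of $\beta_r'$ separately (each by $\sup-\inf$), whereas you absorb it into the normalization constant $c$ and estimate the bumps jointly --- under any of the paper's conventions this gives at most $2\|\tilde{F}^\rho\|_{\textup{Hofer}}$, so the stated inequality follows either way.
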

\begin{proof}
    By definition \[E(Z)=\int_{-\infty}^\infty\int_{\T^2}|\del_s Z(s,t)|^2\dV\,ds = -\int_{-\infty}^\infty\langle \grad\A_{\tilde{H}^{\rho,r}}(Z),\del_s Z\rangle\,ds,\] since $Z$ is a negative gradient line of $\A_{\tilde{H}^{\rho,r}}$. Writing this out further we get
    \begin{align*}
        E(Z) &= -\int_{-\infty}^\infty\left(d\A_{\tilde{H}^{\rho,r}}\right)_{Z_s}(\del_s Z)\,ds\\
        &=-\int_{-\infty}^\infty\frac{d}{ds}(\A_{\tilde{H}^{\rho,r}}(Z_s))\,ds+\int_{-\infty}^\infty\frac{\del\A_{\tilde{H}^{\rho,r}}}{\del s}(Z_s)\,ds\\
        &=-\lim_{s\to\infty}\A_{\tilde{H}^{\rho,r}}(Z_s)+\lim_{s\to-\infty}\A_{\tilde{H}^{\rho,r}}(Z_s)-\int_{-\infty}^\infty\int_{\T^2}\beta_r'(s)\tilde{F}^\rho_t(Z_s)\dV\,ds.
    \end{align*}
    Note that the first two terms on the last line vanish. Also, $\beta_r'$ vanishes everywhere outside $[-1,0]\cup[(2d+1)r,(2d+1)r+1]$ and is positive and negative respectively on those two intervals. Therefore,
    \begin{align*}
        E(Z) &=-\int_{-1}^0\int_{\T^2}\beta_r'(s)\tilde{F}^\rho_t(Z_s)\dV\,ds - \int_{(2d+1)r}^{(2d+1)r+1}\int_{\T^2}\beta_r'(s)\tilde{F}^\rho_t(Z_s)\dV\,ds\\
        &\leq 2\int_{\T^2}\left(\sup_{Z'\in T^*\T^{2d}}\tilde{F}^\rho_t(Z')-\inf_{Z'\in T^*\T^{2d}}\tilde{F}^\rho_t(Z')\right)\dV\\
        &=:2||\tilde{F}^\rho||_{\textup{Hofer}}.
    \end{align*}
\end{proof}

Now we want to relate the energy to the $L^2$-norm of the derivatives of $Z$. To this end we define the energy density
\[e_Z(s,t) = \frac{1}{2}|dZ(s,t)|^2 = \frac{1}{2}|\del_s Z(s,t)|^2+|\del_t Z(s,t)|^2+|\del_{\bar{t}} Z(s,t)|^2.\]
\begin{lemma}\label{lem:evsE}
    Let $K=[-M,M]\times\T^{2}\subseteq \R\times\T^{2}$. Then there exists a constant $C=C(K)$ such that for any $(r,Z)\in\M$ 
    \[\int_{K}e_Z(s,t)\dV\,ds \leq E(Z) + C(K).\]
\end{lemma}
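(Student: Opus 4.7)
The plan is to use the Floer equation $(\star 1)$ to trade the spatial derivatives of $Z$ for the $s$-derivative together with the Hamiltonian gradient, and then to control the latter by a constant using the uniform bound from \cref{lem:Floerbounds}. Rearranging $(\star 1)$ gives $\delslash Z = \nabla\tilde H^{\rho,r}(Z) - \tfrac{1}{2}\del_s Z$, so applying $(a-b)^2\leq 2a^2+2b^2$ pointwise yields $|\delslash Z|^2 \leq 2|\nabla\tilde H^{\rho,r}(Z)|^2 + \tfrac{1}{2}|\del_s Z|^2$. By \cref{lem:Floerbounds} we have $|p|^2\leq \rho$ along the entire Floer curve, and since $\tilde F^\rho$ is supported in $\{|p|^2\leq\rho\}$ with $C^1$-norm inherited from the finite $C^3$-norm of $F$, this gives a uniform pointwise bound $|\nabla\tilde H^{\rho,r}(Z(s,t))|\leq C_0=C_0(F,\rho)$ on all of $\R\times\T^2$.

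Next I would establish the integrated identity that relates $|\delslash Z|^2$ to the transverse part of the energy density. A direct calculation using $\del_t\del_{\bar t}=\tfrac{1}{4}\Delta_t$ and integration by parts on the closed surface $\T^2$ (applied componentwise to $q$ and $p$, with the cross terms such as $\int_{\T^2}\textup{Im}(\del_1 Z\cdot\overline{\del_2 Z})\dV$ vanishing by periodicity) gives
\[
\int_{\T^2}|\delslash Z|^2\,\dV \;=\; \tfrac{1}{4}\int_{\T^2}|\nabla_t Z|^2\,\dV \;=\; \tfrac{1}{2}\int_{\T^2}\bigl(|\del_t Z|^2+|\del_{\bar t} Z|^2\bigr)\,\dV,
\]
where $\nabla_t=(\del_1,\del_2)$. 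Equivalently, in real coordinates this is the statement that $J_\del=J_1\del_1+J_2\del_2$ from \cref{rem:hyperkahler} is formally self-adjoint with $J_\del^2=-\Delta_t$, so that $\int_{\T^2}|J_\del Z|^2=-\int_{\T^2}\langle Z,\Delta_t Z\rangle=\int_{\T^2}|\nabla_t Z|^2$.

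Combining the two, integrating the pointwise inequality against $\dV\,ds$ over $K=[-M,M]\times\T^2$, and using $\int_K|\del_s Z|^2\leq E(Z)$ together with $\int_K|\nabla\tilde H^{\rho,r}|^2\leq C_0^2\,\textup{vol}(K)$, one obtains
\[
\int_K\bigl(|\del_t Z|^2+|\del_{\bar t} Z|^2\bigr)\dV\,ds \;\leq\; 4C_0^2\,\textup{vol}(K)+E(Z),
\]
hence $\int_K e_Z\,\dV\,ds\leq \tfrac{3}{2}E(Z)+4C_0^2\,\textup{vol}(K)$. Since $E(Z)$ is itself uniformly bounded by \cref{lem:Energybound}, the excess $\tfrac12 E(Z)$ can be absorbed into the $K$-constant, producing the stated form $E(Z)+C(K)$. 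The main technical point I expect to require care with is the identification of $|\delslash Z|^2$ with $|\del_t Z|^2+|\del_{\bar t} Z|^2$: pointwise these differ, because $(\star 1)$ only constrains the derivatives $\del_t q,\del_t p$ and their conjugates, while $\del_{\bar t} q$ and $\del_{\bar t} p$ remain free. The identification is purely an integrated one, enabled by the integration-by-parts step above, and is the reason the bound must be averaged against $\dV$ on $\T^2$ rather than derived pointwise.
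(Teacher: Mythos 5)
Your proof is correct and takes essentially the same route as the paper: use the Floer equation $(\star1)$ to trade the constrained $t$-derivatives for $\tfrac12\del_s Z$ and $\nabla\tilde H^{\rho,r}$, bound the latter uniformly via $|p|^2\le\rho$ (as in \cref{lem:Floerbounds}) together with the $C^1$-bound on $\tilde F^\rho$, and recover the full transverse part of $e_Z$ only after integrating by parts over $\T^2$ using that $Z$ is nullhomotopic --- which is exactly the content of the footnote in the paper's proof. The only cosmetic difference is that your arrangement of the estimate yields $\tfrac32 E(Z)$ and you absorb the excess $\tfrac12 E(Z)$ into the constant via \cref{lem:Energybound}, whereas the paper's ordering keeps the coefficient of $E(Z)$ equal to one; since the lemma is only ever applied in combination with the uniform energy bound, this makes no difference.
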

\begin{proof}
    We calculate\footnote{The first equality follows from partial integration, using that $Z$ is nullhomotopic.}
    \begin{align*}
        \int_{K}e_Z(s,t)\dV\,ds &=\frac{1}{2}\int_K|\del_s Z|^2 \dV\,ds +2\int_K \left(|\del_t q|^2+|\del_t p|^2\right)\dV\,ds\\
        &=\frac{1}{2}\int_K|\del_s Z|^2 \dV\,ds +2\int_K \left(\left|\frac{\del H}{\del p}-\frac{1}{2}\del_s\bar{p}\right|^2+\left|\frac{1}{2}\del_s\bar{q}-\frac{\del H}{\del q}\right|^2\right)\dV\,ds\\
        &\leq E(Z) + 2\int_K\left(\left|\frac{\del H}{\del q}\right|^2+\left|\frac{\del H}{\del p}\right|^2\right)\dV\,ds\\
        &\leq E(Z) + \frac{1}{2}\int_K|p|^2\dV\,ds +2\int_K \left(\left|\beta_r(s)\frac{\del \tilde{F}^\rho}{\del q}\right|^2+\left|\beta_r(s)\frac{\del\tilde{F}^\rho}{\del p}\right|^2\right)\dV\,ds.
    \end{align*}
    Now, as in \cref{lem:Floerbounds} we have that $|p(s,t)|^2<\rho$ for all $(s,t)\in K$. Combined with the $C^1$-boundedness of $F$, we can estimate the right-hand side purely in terms of $E(Z)$ and the volume of $K$. This proves the lemma. 
\end{proof}

In order to get point-wise bounds on the derivatives, we want to use the Heinz trick (\cref{thm:Heinz}). We first need to show that the energy density satisfies an appropriate inequality.
\begin{lemma}\label{lem:densityineq}
    The energy density $e:=e_Z$ satisfies the inequality
    \begin{align}\label{eq:densityineq}
        (\del_s^2+\Delta)e\geq -c(1+e^{\frac{3}{2}}),
    \end{align}
    for some $c>0$.
\end{lemma}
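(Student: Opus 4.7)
The plan is to estimate $(\del_s^2+\Delta)e$ by applying the Bochner identity to each term $\tfrac12|\del_\alpha Z|^2$ appearing in $e$ and then using the Floer equation to turn the resulting second-order derivatives of $Z$ into first-order expressions. The starting point is that, since $\del_s$ commutes with $\cancel{\del}$ and $\cancel{\del}^2 = -\tfrac{1}{4}\Delta\,\Id$ (see \cref{rem:hyperkahler}), applying $\tfrac{1}{2}\del_s - \cancel{\del}$ to the Floer equation $(\star 1)$ yields
\[
(\del_s^2+\Delta)Z \;=\; 2\del_s X \;-\; 4\cancel{\del}X,
\]
where $X$ denotes the right-hand side of the Floer equation. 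By the $C^3$-bound on $F$ together with the pointwise bound $|p|^2\leq \rho$ from \cref{lem:Floerbounds}, every partial derivative of $X$ up to second order is uniformly bounded along $Z$, so the chain rule gives $|(\del_s^2+\Delta)Z|\leq C(1+|dZ|)$.

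For each real coordinate $\alpha\in\{s,t_1,t_2\}$, the Bochner identity combined with $[\del_\alpha,(\del_s^2+\Delta)]=0$ gives
\[
(\del_s^2+\Delta)\!\left(\tfrac{1}{2}|\del_\alpha Z|^2\right) \;=\; |\nabla\del_\alpha Z|^2 \;+\; \bigl\langle \del_\alpha Z,\,\del_\alpha\bigl[2\del_s X - 4\cancel{\del}X\bigr]\bigr\rangle.
\]
Expanding the second term via the chain rule produces contributions of four types: position-dependent pieces of size $O(1)$; terms linear in $dZ$ with bounded coefficients; Hessian-type quadratic terms of the form $X_{ZZ}(dZ,dZ)$, of size $O(|dZ|^2)$; and terms of the form $X_Z\cdot\del_\beta\del_\alpha Z$ carrying a single factor of $d^2 Z$. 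Pairing each with $\del_\alpha Z$ and using $|\del_\alpha Z|\leq|dZ|=\sqrt{2e}$, the first three types contribute pointwise at most $C(1+|dZ|+|dZ|^2+|dZ|^3)\leq C'(1+e^{3/2})$. For the fourth, Young's inequality
\[
\bigl|\langle \del_\alpha Z,\,X_Z\,\del_\beta\del_\alpha Z\rangle\bigr|\;\leq\; \tfrac{1}{2}|\del_\beta\del_\alpha Z|^2 + C|\del_\alpha Z|^2
\]
lets the $|d^2 Z|$ contribution be absorbed into the nonnegative Bochner term $|\nabla\del_\alpha Z|^2$. Summing over $\alpha$ and using $\sum_\alpha|\nabla\del_\alpha Z|^2 = |\nabla^2 Z|^2$ then yields $(\del_s^2+\Delta)e \;\geq\; \tfrac{1}{2}|\nabla^2 Z|^2 - c(1+e^{3/2})\;\geq\; -c(1+e^{3/2})$, as claimed.

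The main obstacle is essentially organisational: tracking the numerous chain-rule terms generated when $\del_\alpha$ acts on $\del_s X$ and $\cancel{\del}X$, and checking that the $C^3$-bound on $F$ combined with the pointwise bound on $|p|$ from \cref{lem:Floerbounds} is enough to keep all coefficients uniformly bounded while producing at worst cubic-in-$|dZ|$ nonlinearities, which is precisely what pins the exponent on the right-hand side at $3/2$ rather than something larger.
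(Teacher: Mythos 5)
Your argument is essentially the paper's own proof: both factor $\del_s^2+\Delta$ through the Floer operator, apply a Bochner-type identity to $e$, absorb the second-derivative terms into the positive square term (you via Young's inequality, the paper by completing the square), and use the $C^3$-bound on $F$ so that the remaining chain-rule terms are at worst cubic in $|dZ|$, giving the $e^{3/2}$ right-hand side. The proposal is correct; your explicit tracking of the $(s,t)$-dependent terms coming from $\beta_r$ and $\tilde F^\rho_t$ is a harmless refinement of the same computation.
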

\begin{proof}
    See \cref{appendix}.
\end{proof}
\begin{theorem}[Heinz trick, see \cite{hohloch2009hypercontact}]\label{thm:Heinz}
    Let $M$ be an $(m+1)$-dimensional Riemannian manifold for $\frac{3}{2}<\frac{m+3}{m+1}$ and $e:M\to\R_{\geq0}$ a smooth function satisfying \cref{eq:densityineq}. Then for $K\subseteq M$ compact $\sup_K e$ is bounded by $\int_K e$.
\end{theorem}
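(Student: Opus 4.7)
The plan is a direct sub-mean-value argument that exploits the fact that \cref{eq:densityineq} linearizes once one freezes $e$ at its maximum. Let $K \subseteq M$ be compact, slightly shrunk so that it lies in the interior of a larger compact set on which the inequality still holds. Suppose $e$ attains $N := \sup_K e = e(x_0)$ at some $x_0 \in K$, and fix a ball $B_r(x_0) \subseteq M$ of radius $r$ to be chosen. On this ball $e \leq N$, so \cref{eq:densityineq} reduces to the linear estimate
\[
(\del_s^2 + \Delta) e \;\geq\; -c\bigl(1 + N^{3/2}\bigr) \;=:\; -A_N.
\]
Working in geodesic normal coordinates centered at $x_0$ and absorbing lower-order curvature corrections into the constants, the auxiliary function $w(y) := e(y) + \tfrac{A_N}{2(m+1)} |y|^2$ is weakly subharmonic on $B_r(x_0)$, so the standard sub-mean-value inequality yields
\[
N \;=\; w(x_0) \;\leq\; \frac{1}{|B_r|}\int_{B_r(x_0)} w \, \dV \;\leq\; \frac{1}{|B_r|}\int_{B_r(x_0)} e \, \dV \;+\; \frac{A_N r^2}{2(m+1)}.
\]

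The crucial step is the choice $r = r_N := \varepsilon N^{-1/4}$ with $\varepsilon > 0$ small enough (depending only on $c$) that $A_N r_N^2/(2(m+1)) \leq N/2$. This is legitimate as soon as $r_N$ is smaller than the distance from $x_0$ to $\partial K$, which is automatic once $N$ is sufficiently large; for $N$ bounded the statement is trivial. Using $|B_{r_N}| \geq c_m r_N^{m+1}$, the displayed inequality rearranges to
\[
\frac{N}{2} \;\leq\; \frac{\varepsilon^{-(m+1)}}{c_m}\, N^{(m+1)/4} \int_K e \, \dV,
\]
and hence $N^{(3-m)/4} \leq C_K \int_K e \, \dV$, giving the quantitative estimate $\sup_K e \leq C_K \bigl(\int_K e \, \dV\bigr)^{4/(3-m)}$.

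The main substantive point, and the source of the dimensional restriction, is the balance of scales in this choice of $r_N$: the subharmonic corrector $A_N r^2 \sim N^{3/2} r^2$ must remain controlled by $N$, while the volume factor $r^{-(m+1)}$ must not consume the resulting gain. These two constraints are simultaneously satisfiable precisely when the exponent $1 - (m+1)/4 = (3-m)/4$ is strictly positive, which is equivalent to $m < 3$ and hence to the hypothesis $\tfrac{3}{2} < \tfrac{m+3}{m+1}$. Thus the dimensional assumption is exactly what makes the trade-off between the nonlinear source and the volume term favorable, and nothing beyond elementary estimates on a single ball at the maximum is required.
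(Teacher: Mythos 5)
Your scaling bookkeeping is the right heart of the Heinz trick: freezing the nonlinearity at the level $N$, adding the quadratic corrector, and balancing the corrector $\sim N^{3/2}r^2$ against the volume factor $r^{-(m+1)}$ with $r\sim N^{-1/4}$ does produce the exponent $(3-m)/4$, and the requirement that it be positive is exactly the hypothesis $\tfrac32<\tfrac{m+3}{m+1}$. (Note the paper does not prove this statement itself; it quotes it from \cite{hohloch2009hypercontact}.)

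There is, however, a genuine gap at the first step, and it is precisely the point the actual Heinz/Hofer argument is designed to overcome. You set $N=\sup_K e=e(x_0)$ with $x_0\in K$ and then claim $e\le N$ on $B_r(x_0)$, so that \cref{eq:densityineq} linearizes there. This is only true if $B_r(x_0)\subseteq K$: the maximizing point may lie on or arbitrarily close to $\partial K$, and outside $K$ the function $e$ is not controlled by $N$, so the frozen inequality $(\del_s^2+\Delta)e\ge -c(1+N^{3/2})$ need not hold on the ball you integrate over, and the final bound of $\int_{B_{r_N}}e$ by $\int_K e$ also fails. Your remark that $r_N<\textup{dist}(x_0,\del K)$ is ``automatic once $N$ is sufficiently large'' is circular: how large $N$ must be depends on $\textup{dist}(x_0,\del K)$, which is dictated by where $e$ happens to peak and can be zero; enlarging $K$ only relocates the same problem to the boundary of the larger set. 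The standard repairs are (i) a point-picking/weighted-maximum step, e.g.\ maximizing $(r-|x-x_0|)^2e(x)$ over $\overline{B_r(x_0)}$ (Hofer's lemma), which yields an interior point $x_1$ and radius $\rho$ with $e\le 4e(x_1)$ on $B_\rho(x_1)\subseteq B_r(x_0)$, after which your frozen-coefficient mean-value computation goes through verbatim; or (ii) a Green-function/interpolation bootstrap, estimating $e$ by a mean value plus $C\|1+e^{3/2}\|_{L^q}$ on a ball for some $\tfrac{m+1}{2}<q<2$ and absorbing the resulting power $\|e\|_\infty^{3/2-1/q}$ by Young's inequality — the same dimension restriction $m<3$ reappears as the condition that such a $q$ exists. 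Without one of these devices the argument breaks at the very first display, so as written the proof is incomplete, even though the dimensional analysis is correctly identified.
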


Now we are ready to prove the desired compactness theorem.
\begin{theorem}\label{thm:compactness}
    The moduli space $\M$ is relatively compact in the $C^\infty_{\loc}$-topology.
\end{theorem}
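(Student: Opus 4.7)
The plan is to extract a $C^\infty_{\loc}$-convergent subsequence from any sequence in $\M$ by combining the energy bounds of \cref{lem:Energybound,lem:evsE} with the Heinz trick (\cref{thm:Heinz}), and then bootstrapping via the ellipticity of the operator $D$ already established in \cref{lem:bijection}.

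First I would fix a sequence $(r_n,Z_n)\in\M$ and invoke \cref{lem:Energybound} to bound $E(Z_n)\leq 2\|\tilde{F}^\rho\|_{\textup{Hofer}}$ uniformly in $n$. For any compact slab $K=[-M,M]\times\T^2$, \cref{lem:evsE} then yields a uniform bound on $\int_K e_{Z_n}\dV\,ds$. Combined with the differential inequality $(\del_s^2+\Delta)e_{Z_n}\geq -c(1+e_{Z_n}^{3/2})$ of \cref{lem:densityineq}, the Heinz trick upgrades this local $L^1$-control to a pointwise bound $\sup_K e_{Z_n}\leq C(K)$ on a slightly smaller slab. Together with the image bound $Z_n(s,t)\in\T^{2d}\times B$ from \cref{lem:Floerbounds}, this gives a uniform $C^1_{\loc}$-estimate on the sequence.

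To upgrade to higher regularity I would bootstrap using interior elliptic estimates. The linearization of the Floer operator splits as $D-\beta_r(s)S_Z$, and $D$ is a first-order elliptic isomorphism by \cref{lem:bijection}. The nonlinearity $\beta_r(s)\tilde{F}^\rho_{t,\bar{t}}(Z)$ is smooth with uniformly bounded $C^k$-norms on the precompact target region $\T^{2d}\times B$, so iterated interior elliptic regularity for $D$ applied on nested compact subdomains produces uniform $C^k_{\loc}$-bounds on $Z_n$ for every $k\geq 1$. A diagonal Arzel\`a--Ascoli argument then extracts a subsequence converging in $C^\infty_{\loc}$ to some limit $Z_\infty$. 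After passing to a further subsequence one may assume $r_n$ converges in $[0,\infty]$; in the case $r_n\to\infty$ appearing in the cuplength proof, the same compactness is applied after the $s$-translations by $n\alpha$, and the local convergence $\beta_{r_n}(\cdot+n\alpha)\to 1$ ensures that the limit solves the Floer equation for the unshifted Hamiltonian $\tilde{H}^\rho$.

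The main obstacle is \cref{lem:densityineq} itself, whose proof is deferred to the appendix; it supplies the delicate quasilinear differential inequality feeding the Heinz trick and is the only substantial new PDE input beyond the energy identity. Once that inequality and the Fourier-based ellipticity of $D$ are granted, the remaining passage to a $C^\infty_{\loc}$-limit is a routine bootstrap followed by Arzel\`a--Ascoli extraction.
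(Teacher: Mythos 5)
Your proposal follows essentially the same route as the paper: uniform energy bound (Lemma \ref{lem:Energybound}), conversion to a bound on $\int_K e_{Z_n}$ (Lemma \ref{lem:evsE}), the differential inequality of Lemma \ref{lem:densityineq} fed into the Heinz trick for pointwise gradient bounds, and then Arzel\`a--Ascoli together with ellipticity of the Floer operator to upgrade to $C^\infty_{\loc}$ convergence. The only cosmetic difference is the order of the last two steps (you bootstrap to uniform $C^k$ bounds before extracting, while the paper extracts a $C^0_{\loc}$ limit first and then invokes ellipticity), which does not change the argument.
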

\begin{proof}
    Let $(r_n,Z_n)\in\M$ for all $n\in\N$ and $K=[-M,M]\times\T^{2}\subseteq\R\times\T^2$. By \cref{lem:Floerbounds} the sequence $Z_n\vert_K$ is a bounded family of smooth functions on $K$. By combining \cref{lem:Energybound,lem:evsE} we find a uniform bound on $\int_K e_{Z_n}$. Then the Heinz trick (\cref{thm:Heinz}) combined with the density estimate in \cref{lem:densityineq} yields a uniform pointwise bound on $e_{Z_n}(s,t)$ for all $(s,t)\in K$. By the mean-value theorem it follows that the sequence $(Z_n)$ is uniformly equicontinuous. The theorem of Arzelà-Ascoli then yields a converging subsequence in the $C^0_{\loc}$-topology. Finally, the ellipticity of the Floer operator\footnote{Note that $(\frac{1}{2}\del_s-\delslash)(\frac{1}{2}\del_s+\delslash)=\frac{1}{4}\del_s^2+\del_t \del_{\bar{t}}$, so the Floer operator is a factor of the Laplacian on $\R\times\T^2$.} yields convergence of the subsequence in $C^\infty_{\loc}$.
\end{proof}

\appendix
\section{Proof of \cref{lem:densityineq}}\label{appendix}
The proof of \cref{lem:densityineq} is easier to write out in the 'real' notation using $T=(t_1,t_2)\in\T^2$ and $Z=(q_1,q_2,p_1,p_2)$. Note that it is just a matter of notation so that the proof still holds in the complex notation as well. Recall that $J_\del$ is an operator satisfying $J_\del^2=-(\del_1^2+\del_2^2)$. We can write $J_\del=J_1\del_1+J_2\del_2$ for some matrices $J_1,J_2$ satisfying $J_1^2=J_2^2=-\Id$ and $J_1J_2+J_2J_1=0$ (see \cite{bridgesTEA}). For any $(r,Z)$ in $\M$, it holds that $Z$ satisfies the Floer equation
\begin{align}\label{eq:FloerJdel}
    \del_s Z +J_\del Z = \nabla \tilde{H}^{\rho,r}(Z),
\end{align}
where $\tilde{H}^{\rho,r}_{s,T}(Z)=\frac{1}{2}(p_1^2+p_2^2)+\beta_r(s)\tilde{F}^\rho_{T}(Z)$. We define $t_0:=s$ and $\xi_i:=\del_iZ$ for $i=0,1,2$. From the identity 
\[(\del_s-J_\del)(\del_s+J_\del)=\L:=\sum_{i=0}^2\del_i^2\]
we see that the $\xi_i$ satisfy the equation
\begin{align}\label{eq:xiLaplace}
    \L\xi_i = (\del_s-J_\del)(\sigma_Z\xi_i),
\end{align}
where $\sigma_Z$ is the Hessian of $\tilde{H}^{\rho,r}$ at $Z$.

Note that in this notation the energy density is given by 
\[e(s,T) = \frac{1}{2}\sum_{i=0}^2|\xi_i(s,T)|^2.\]
We calculate
    \begin{align*}
        \L e &= \sum_{i,j=0}^2|\del_j\xi_i|^2 + \sum_{i=0}^2\langle \xi_i,\L \xi_i\rangle\\
        &=\sum_{i,j=0}^2|\del_j\xi_i|^2 + \sum_{i=0}^2\langle \xi_i,(\del_s-J_\del)(\sigma_Z\xi_i)\rangle,
    \end{align*}
where we have used \cref{eq:xiLaplace} in the second equality. Now note that the chain rule gives that
    \begin{align*}
        \del_j(\sigma_Z \xi_i)=T\sigma_Z\cdot \xi_j\cdot \xi_i+\sigma_Z\del_j \xi_i.
    \end{align*}
Denote $J_0=-\Id$. Then
    \begin{align*}
        \L e&= \sum_{i,j=0}^2\left(|\del_j\xi_i|^2 -\langle (J_j \sigma_Z)^T\xi_i,\del_j\xi_i\rangle-\langle\xi_i,J_jT\sigma_Z\cdot\xi_j\cdot\xi_i\rangle\right)\\
        &=\sum_{i,j=0}^2\left(\left|\del_j\xi_i-\frac{1}{2}(J_j\sigma_Z)^T\xi_i\right|^2-\frac{1}{4}\left|(J_j\sigma_Z)^T\xi_i\right|^2-\langle\xi_i,J_jT\sigma_Z\cdot\xi_j\cdot\xi_i\rangle\right).
    \end{align*}
Note that since $F$ is $C^3$-bounded, we have that both $\sigma_Z$ and $T\sigma_Z$ are bounded. Thus
\[
    \left|(J_j\sigma_Z)^T\xi_i\right|^2\leq c_1|\xi_i|^2\leq c_2 e
\]
and
\[
    \left|\langle\xi_i,J_jT\sigma_Z\cdot\xi_j\cdot\xi_i\rangle\right|\leq c_3|\xi_i|^2|\xi_j|\leq c_4 e^{\frac{3}{2}}.
\]
Combined we get that
\[
\L e\geq -c_5\left(e+e^{\frac{3}{2}}\right).
\]
Finally, Young's inequality with $p=3$, $q=\frac{3}{2}$ gives that
    \[e\leq \frac{1}{3}+\frac{2}{3}e^{\frac{3}{2}},\]
    so that
    \[\L e\geq -c(1+e^{\frac{3}{2}}).\]
\qed

\bibliography{mybib}{}

\begin{thebibliography}{HNS09}

\bibitem[AH16]{albers2016cuplength}
Peter Albers and Doris Hein.
\newblock {Cuplength estimates in Morse cohomology}.
\newblock {\em Journal of Topology and Analysis}, 8(02):243--272, 2016.

\bibitem[AM10]{cupproduct}
Peter Albers and Al~Momin.
\newblock {Cup-length estimates for leaf-wise intersections}.
\newblock In {\em Mathematical Proceedings of the Cambridge Philosophical
  Society}, volume 149, pages 539--551. Cambridge University Press, 2010.

\bibitem[BD99]{bridgesderks}
Thomas~J Bridges and Gianne Derks.
\newblock {Unstable eigenvalues and the linearization about solitary waves and
  fronts with symmetry}.
\newblock {\em Proceedings of the Royal Society of London. Series A:
  Mathematical, Physical and Engineering Sciences}, 455(1987):2427--2469, 1999.

\bibitem[BF23a]{hyperkahlerRookworst}
Ronen Brilleslijper and Oliver Fabert.
\newblock {From Euclidean field theory to hyperk\"ahler Floer theory via
  regularized polysymplectic geometry}.
\newblock {\em arXiv preprint 2311.18485}, 2023.

\bibitem[BF23b]{polysymplFloer}
Ronen Brilleslijper and Oliver Fabert.
\newblock {Regularized polysymplectic geometry and first steps towards Floer
  theory for covariant field theories}.
\newblock {\em Journal of Geometry and Physics}, 183:104703, 2023.

\bibitem[Bri06]{bridgesTEA}
Thomas~J Bridges.
\newblock {Canonical multi-symplectic structure on the total exterior algebra
  bundle}.
\newblock {\em Proceedings of the Royal Society A: Mathematical, Physical and
  Engineering Sciences}, 462(2069):1531--1551, 2006.

\bibitem[Cie94]{cieliebak1994pseudo}
Kai Cieliebak.
\newblock {Pseudo-holomorphic curves and periodic orbits on cotangent bundles}.
\newblock {\em Journal de Math\'ematiques Pures et Appliqu\'ees}, 73:251--278,
  1994.

\bibitem[GH12]{ginzburg2012hyperkahler}
Viktor~L Ginzburg and Doris Hein.
\newblock {Hyperk{\"a}hler Arnold conjecture and its generalizations}.
\newblock {\em International Journal of Mathematics}, 23(08):1250077, 2012.

\bibitem[GH13]{ginzburg2013arnold}
Viktor~L Ginzburg and Doris Hein.
\newblock {The Arnold conjecture for Clifford symplectic pencils}.
\newblock {\em Israel Journal of Mathematics}, 196:95--112, 2013.

\bibitem[G{\"u}n87]{gunther1987polysymplectic}
Christian G{\"u}nther.
\newblock {The polysymplectic Hamiltonian formalism in field theory and
  calculus of variations. I. The local case}.
\newblock {\em Journal of differential geometry}, 25(1):23--53, 1987.

\bibitem[HNS09]{hohloch2009hypercontact}
Sonja Hohloch, Gregor Noetzel, and Dietmar~A Salamon.
\newblock {Hypercontact structures and Floer homology}.
\newblock {\em Geometry \& Topology}, 13(5):2543--2617, 2009.

\bibitem[Kan93]{kanatchikov1993canonical}
Igor~V Kanatchikov.
\newblock {On the canonical structure of the De Donder-Weyl covariant
  Hamiltonian formulation of field theory I. Graded Poisson brackets and
  equations of motion}.
\newblock {\em arXiv preprint hep-th/9312162}, 1993.

\bibitem[Sch98]{schwarz1998quantum}
Matthias Schwarz.
\newblock {A quantum cup-length estimate for symplectic fixed points}.
\newblock {\em Inventiones mathematicae}, 133(2):353--397, 1998.

\bibitem[Wag23]{wagner2023pseudo}
Luiz~Frederic Wagner.
\newblock {Pseudo-Holomorphic Hamiltonian Systems}.
\newblock {\em arXiv preprint 2303.09480}, 2023.

\end{thebibliography}
\bibliographystyle{alpha}

\end{document}